\newtheorem{thm}{Theorem}[section]
\newtheorem{lem}[thm]{Lemma}
\newtheorem{pro}[thm]{Proposition}
\numberwithin{equation}{section}
\begin{document}

\title{\textbf{Zero-density estimates for $\bm{L}$-functions attached to cusp forms}}
\author{Yoshikatsu Yashiro\\
\small Graduate School of Mathematics, Nagoya University,\\[-6pt] 
\small 464-8602 \ Chikusa-ku, Nagoya, Japan \\[-6pt] 
\small E-mail: m09050b@math.nagoya-u.ac.jp
}
\date{}
\maketitle

\renewcommand{\thefootnote}{}
\footnote{2010 {\itshape Mathematics Subject Classification}: Primary 11M26; Secondary 11N75.}
\footnote{{\itshape Key words and phrases}: cusp forms, $L$-functions, zero-density.}
\begin{abstract}
Let $S_k$ be the space of holomorphic cusp forms of weight $k$ with respect to $SL_2(\mathbb{Z})$. Let $f\in S_k$ be a normalized Hecke eigenform, $L_f(s)$ the $L$-function attached to the form $f$. 
In this paper we consider the distribution of zeros of $L_f(s)$ in the strip $\sigma\leq\text{Re }s\leq 1$ for fixed $\sigma>1/2$ with respect to the imaginary part. We study estimates of  
$$N_f(\sigma,T)=\#\{\rho\in\mathbb{C}\mid L_f(\rho)=0, \ \sigma\leq\text{Re }\rho\leq 1, \  0<\text{Im }\rho\leq T\}$$ 
for $1/2\leq\sigma\leq1$ and large $T>0$. Using the methods of Karatsuba and Voronin we shall give another proof for Ivi\'{c}'s method. 
\end{abstract}

\section{Introduction}
It is conjectured that non-trivial zeros of the Riemann zeta function $\zeta(s)$ lie on the critical line $\text{Re }s=1/2$ (in short RH). For $1/2\leq\sigma\leq 1$ and large $T>0$, let $N(\sigma,T)$ be the number of zeros of $\zeta(s)$ in the region $\sigma\leq\text{Re }s\leq 1$ and $0<\text{Im }s\leq T$. If RH is true then $N(\sigma,T)=0$ for $1/2<\sigma\leq 1$. The purpose of the zero-density theory for $\zeta(s)$ is to estimate $N(\sigma,T)$  as small as possible to support RH. For this problem Bohr and Landau \cite{B&L} began to study zero-density for $\zeta(s)$ and proved 
$N(\sigma,T)\ll T$ uniformly for $1/2\leq\sigma\leq 1$. 
Ingham \cite{IN1} improved their result to 
\begin{align}
N(\sigma,T)\ll T^{\frac{3}{2-\sigma}(1-\sigma)}(\log T)^{5} \quad (1/2\leq\sigma\leq 1). \label{NEI}
\end{align} 
Moreover Huxley \cite{HUX} improved the result (\ref{NEI}) for $\sigma\geq3/4$ to 
\begin{align}
N(\sigma,T)\ll T^{\frac{3}{3\sigma-1}(1-\sigma)}(\log T)^{44} \quad (3/4\leq\sigma\leq 1). \label{NEH}
\end{align}  
The results (\ref{NEI}) and (\ref{NEH}) give the following estimate:
\begin{align}
N(\sigma,T)\ll T^{2.4(1-\sigma)}(\log T)^{444} \quad (1/2\leq\sigma\leq 1). \label{NEK}
\end{align}
Karatsuba and Voronin \cite{KAR} gave alternative proof for (\ref{NEK}) by using the approximate functional equation for $\zeta(s)$ (see Hardy and Littlewood \cite{HL3}). 

In this paper we study the distribution of zeros of $L$-functions associated to holomorphic cusp forms. Let $S_k$ be the space of cusp forms of weight $k\in\mathbb{Z}_{\geq 12}$ with respect to the full modular group $SL_2(\mathbb{Z})$. Let $f\in S_k$ be a normalized Hecke eigenform, and $a_f(n)$ the $n$-th Fourier coefficient of $f$.  It is known that all $a_f(n)$'s are real numbers (see \cite[Chapter 6.14]{APO}) and estimated as $|a_f(n)|\leq d(n)n^{\frac{k-1}{2}}$ by Deligne \cite{DEL}, where $d(n)$ is the divisor function defined by $d(n)=\sum_{m|n}1$. The $L$-function attatched to $f$ is defined by
\begin{align}
L_f(s)=\sum_{n=1}^\infty\frac{\lambda_f(n)}{n^s}=\prod_{p\text{:prime}}\frac{1}{1-\lambda_f(p)p^{-s}+p^{-2s}} \quad (\text{Re }s>1),\label{LFD}
\end{align}
where $\lambda_f(n)=a_f(n)n^{-\frac{k-1}{2}}$. Hecke \cite{HEC} proved that $L_f(s)$ has an analytic continuation to the whole $s$-plane and the completed $L$-function
\begin{align}
\Lambda_f(s)=(2\pi)^{-s-\frac{k-1}{2}}\Gamma(s+\tfrac{k-1}{2})L_f(s)=\int_0^\infty f(iy)y^{s+\frac{k-1}{2}-1}dy \label{ALF}
\end{align}
satisfies $\Lambda_f(s)=(-1)^{k/2}\Lambda_f(1-s)$, namely,
\begin{align}
L_f(s)=\chi_f(s)L_f(1-s)  \label{FEL}
\end{align}
for all $s\in\mathbb{C}$, where $\chi_f(s)$ is defined by
\begin{align*}
\chi_f(s)=(-1)^{k/2}(2\pi)^{2s-1}\frac{\Gamma(1-s+\tfrac{k-1}{2})}{\Gamma(s+\tfrac{k-1}{2})}.
\end{align*}
By (\ref{LFD}) and (\ref{FEL}), $L_f(s)$ has no zeros in $\text{Re }s>1$ and $\text{Re }s<0$ except $s=-n+1/2\;(n\in\mathbb{Z}_{\geq k/2})$. The zeros of $L_f(s)$ in the critical strip $0\leq\text{Re }s\leq 1$ are called {\itshape non-trivial zeros}. Moreover, by (\ref{ALF}) and (\ref{FEL}), the non-trivial zeros are located symmetrically with respect to $\text{Im }s=0$ and $\text{Re }s=1/2$. 

It is opened that all of non-trivial zeros of $L_f(s)$ lie on $\text{Re }s=1/2$, which is called the Generalized Riemann Hypothesis (in short GRH). Let $N_f(\sigma,T)$ be the function defined by
\begin{align}
N_f(\sigma,T)=\#\{\rho\in\mathbb{C}\mid  L_f(\rho)=0, \ \sigma\leq\text{Re }\rho<1, \ 0<\text{Im }\rho\leq T \}. \label{NFL}
\end{align} 
As in the case of the Riemann zeta function, it is important to study the behavior of $N_f(\sigma,T)$. It is Ivi\'{c} \cite{IVI} who first proved the non-trivial estimates of $N_f(\sigma,T)$. 

The aim of this paper is to give an alternative proof of Ivi\'{c}'s estimate, namely, 
\begin{thm}\label{ZZZ1}
Let $f\in S_k$ be a normalized Hecke eigenform. For any large $T$ we have 
\begin{align}
N_f(\sigma,T)\ll\left\{\begin{array}{ll} T^{\frac{4}{3-2\sigma}(1-\sigma)+\varepsilon}, & 1/2\leq\sigma\leq 3/4,\\ T^{\frac{2}{\sigma}(1-\sigma)+\varepsilon}, & 3/4\leq\sigma\leq 1\end{array}\right.\label{XXXX}
\end{align}
uniformly $1/2\leq\sigma\leq 1$. Here and later $\varepsilon$ denotes arbitrarily positive small constant.
\end{thm}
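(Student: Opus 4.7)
Following the Karatsuba--Voronin strategy \cite{KAR}, the plan is to combine an approximate functional equation for $L_f$ with large-values estimates for Dirichlet polynomials, the latter controlled by Rankin--Selberg moment bounds.

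First, I would derive an approximate functional equation of the shape
\begin{equation*}
L_f(s) = \sum_{n \leq X} \frac{\lambda_f(n)}{n^s} + \chi_f(s) \sum_{n \leq Y} \frac{\lambda_f(n)}{n^{1-s}} + O(T^{-A}),
\end{equation*}
valid for $s = \sigma + it$ with $T \leq t \leq 2T$ and $XY \asymp (t/(2\pi))^2$; the natural balanced choice for the degree-two $L$-function is $X = Y = T$. Since $\lambda_f(1) = 1$, the vanishing $L_f(\rho) = 0$ at a zero $\rho = \beta + i\gamma$ with $T < \gamma \leq 2T$ forces
\begin{equation*}
-1 = \sum_{2 \leq n \leq T} \frac{\lambda_f(n)}{n^\rho} + \chi_f(\rho) \sum_{n \leq T} \frac{\lambda_f(n)}{n^{1-\rho}} + O(T^{-A}),
\end{equation*}
so at least one of the two sums has modulus $\geq 1/3$. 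Stirling's formula gives $|\chi_f(\sigma + it)| \asymp t^{1-2\sigma}$, which converts the second alternative into the lower bound $\bigl|\sum_{n \leq T} \lambda_f(n) n^{\rho-1}\bigr| \gg T^{2\sigma-1}$. Thus each zero forces a Dirichlet polynomial of length $\leq T$ to be large at a known point.

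Next, I would pick a $(\log T)$-well-spaced subfamily of $\gg N_f(\sigma, T)/\log T$ representative zeros (so as to separate them by at least a $\log$-spacing) and reduce the problem to counting well-spaced points $t_r \in [T, 2T]$ at which some Dirichlet polynomial $D(s) = \sum_{n \leq T} a_n n^{-s}$, with $a_n = \lambda_f(n)$ (and possibly $a_1 = 0$), exceeds a prescribed threshold $V$. The second-moment control, obtained from Montgomery's mean-value theorem together with the Rankin--Selberg estimate $\sum_{n \leq N} \lambda_f(n)^2 \asymp N$, then gives a first bound. For the sharper exponent in the range $\sigma \geq 3/4$ one would further invoke Huxley's large-values inequality in its sixth-power form; the key input is the fourth-power mean for $D(s)$, which, upon squaring, reduces to the estimate $\sum_{n \leq N^2} (\lambda_f * \lambda_f)(n)^2 \ll N^{2+\varepsilon}$, again supplied by Rankin--Selberg.

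Balancing these two bounds produces the two regimes of the theorem: the second moment governs $1/2 \leq \sigma \leq 3/4$ and yields the exponent $4(1-\sigma)/(3-2\sigma)$, while the higher-moment estimate governs $3/4 \leq \sigma \leq 1$ and yields $2(1-\sigma)/\sigma$; the two formulas coincide at $\sigma = 3/4$ with common value $2/3$. I expect the main technical obstacle to be the fourth-power mean for the Dirichlet polynomial with coefficients $\lambda_f(n)$: the Rankin--Selberg constants must be tracked carefully, and the $\chi_f$-factor in Case~II (only $\asymp t^{1-2\sigma}$) pushes the evaluation point into the half-plane $\mathrm{Re}(s) \leq 1/2$, where the relevant mean value is larger and the balancing is most delicate. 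Once this estimate is in hand, the bound \eqref{XXXX} follows by parameter optimisation.
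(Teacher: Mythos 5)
Your framework (approximate functional equation, well-spaced representative zeros, large-value estimates for Dirichlet polynomials) is the right family of ideas, but there is a fatal gap at the zero-detection step: you work with the \emph{unmollified} approximate functional equation. From $L_f(\rho)=0$ you conclude that either $\bigl|\sum_{2\leq n\leq T}\lambda_f(n)n^{-\rho}\bigr|\geq 1/3$ or the $\chi_f$-term is large. The second alternative is fine (and does give $R\ll T^{2-2\sigma+\varepsilon}$, matching Proposition \ref{PROEC}), but the first is vacuous: for $\beta>1/2$ the normalized coefficients satisfy $\sum_{2\leq n\leq T}\lambda_f(n)^2n^{-2\beta}\asymp 1$, so the mean square of this polynomial over $t\in[T,2T]$ is $\asymp T$, and no large-values inequality (Montgomery, Hal\'asz, or Huxley) can bound the number of well-spaced points where it exceeds the \emph{constant} threshold $1/3$ by anything better than $O(T)$; indeed the polynomial is expected to be $\gg 1$ on a positive proportion of the interval. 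The paper avoids this by first multiplying (\ref{LFX}) by the mollifier $M_X(s)=\sum_{m\leq X}\mu_f(m)m^{-s}$ with $X=T_1^{1/\delta}$, so that by (\ref{CLDD}) the detecting polynomial $\sum_{X<l\leq Xy}c_f(l)l^{-s}$ has no terms with $l\leq X$ and the largeness condition $|S(\rho)|\geq 1/(2D)$ becomes genuinely restrictive. To recover the exponent $4(1-\sigma)/(3-2\sigma)$ you additionally need the device of raising the detecting polynomial to a power $\alpha$ chosen so that its length lands in the range where the mean value theorem is efficient (Propositions \ref{NFER1} and \ref{PROEA}); a single application of the second moment to a length-$T$ polynomial does not produce this exponent.

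Secondarily, your treatment of the range $\sigma\geq 3/4$ diverges from the paper's and runs against its stated aim of avoiding higher-moment machinery: the paper obtains $2(1-\sigma)/\sigma$ (in fact for $\sigma\geq 2/3$) not from Huxley's sixth-power large-values inequality but from a Hal\'asz--Montgomery duality step combined with a stationary-phase transformation of the off-diagonal sums $\sum_l l^{i(\gamma'-\gamma)}$ (Lemmas \ref{KKCCA} and \ref{KKCCB}, used inside Proposition \ref{PROEB}). Your auxiliary estimate $\sum_{n\leq N^2}(\lambda_f*\lambda_f)(n)^2\ll N^{2+\varepsilon}$ is true but needs no Rankin--Selberg input (it follows from Deligne's bound $|(\lambda_f*\lambda_f)(n)|\leq d_4(n)$); the real issue is that Huxley's optimization for $\zeta(s)$ yields $3(1-\sigma)/(3\sigma-1)$ with detecting polynomials of length $\asymp T^{1/2}$, whereas here the lengths are $\asymp T$, so you cannot quote that balancing verbatim and would have to redo it to confirm that $2(1-\sigma)/\sigma$ actually comes out.
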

Ivi\'{c} \cite{IVI} proof is based on the second and the sixth power moments of $L_f(s)$ due to Good \cite[Theorem]{GD2} and Jutila \cite[(4.4.2)]{JUT}.
In this paper, instead of using the higher power moments of $L_f(s)$, we follow Karatsuba and Voronin's approach and use only the approximate functional equation of $L_f(s)$ (see Lemma \ref{KOR}) and the well-known estimates of exponential sum (see Lemma \ref{KKCCA}, \ref{KKCCB}).

It is important that we can construct a set $\mathcal{E}$ of zeros of $L_f(s)$ such that the estimate of $R=N_f(\sigma,T_1)-N_f(\sigma,T_1/2)$ is reduced to that of $S(\rho)$, where $1/2\leq\sigma\leq 1$, $2\leq T_1\leq T$ and $S(\rho)$ is a function obtained by multiplying the approximate functional equation of $L_f(s)$ by $1/L_f(s)$. The existence of $\mathcal{E}$ works to obtain an estimate 
\begin{align}
R\ll (\log T_1)^{4\alpha+3}\sum_{\rho\in\mathcal{E}}|S(\rho)|^{2\alpha} \label{IREE}
\end{align}
where $\alpha$ is any fixed positive integer. The upper bound of sum of (\ref{IREE}) is obtained by the technique of Karatsuba and Voronin's calculating, in which power moment of $L_f(s)$ is not needed.

\section{Preliminary Lemmas}\label{SSPL}
To prove Theorem \ref{ZZZ1}, we need Lemmas \ref{LFMXL}--\ref{KKCCB}. First Lemmas \ref{LFMXL}--\ref{MORE} are required to show Proposition \ref{ZSET} (see Section \ref{SSPF}), which is required for the proof of Theorem \ref{ZZZ1}.

\begin{lem}\label{LFMXL}
If we write
\begin{align*}
\frac{1}{L_f(s)}=\sum_{n=1}^\infty\frac{\mu_f(n)}{n^s} \quad ({\rm Re\;} s>1),
\end{align*}
then we see that $\mu_f(n)$ is multiplicative and given by 
\begin{align*}
\mu_f(p^r)=\begin{cases}1, & r=0, \\ -\lambda_f(p), & r=1, \\ 1, & r=2, \\ 0, & r\in\mathbb{Z}_{\geq3} \end{cases} 
\end{align*}
for any prime number $p$. In addition we have $|\mu_f(n)|\leq d(n)$ for $n\in\mathbb{Z}_{\geq1}$.
\end{lem}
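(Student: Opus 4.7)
The plan is to compute $1/L_f(s)$ directly from the Euler product in (\ref{LFD}) and read off the Dirichlet coefficients, from which multiplicativity is automatic and the bound $|\mu_f(n)|\le d(n)$ will follow by reduction to prime powers.

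First I would invert the Euler product: for $\mathrm{Re}\,s>1$, the absolute convergence of (\ref{LFD}) justifies
\begin{align*}
\frac{1}{L_f(s)}=\prod_{p\text{:prime}}\bigl(1-\lambda_f(p)p^{-s}+p^{-2s}\bigr),
\end{align*}
so each local factor is a polynomial of degree $2$ in $p^{-s}$. Comparing this with a Dirichlet series $\sum_{n\ge 1}\mu_f(n)n^{-s}$, the coefficient of $p^{-rs}$ in the $p$-th local factor gives exactly $\mu_f(p^r)$, which yields the stated values ($1$, $-\lambda_f(p)$, $1$, $0$, $0,\ldots$). The multiplicativity of $\mu_f$ is then an immediate consequence of the factorization into local factors: for coprime $m,n$, matching coefficients in the product of the local series gives $\mu_f(mn)=\mu_f(m)\mu_f(n)$.

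To establish $|\mu_f(n)|\le d(n)$, I would first verify it on prime powers. For $r=0$ both sides equal $1$; for $r=2$ the left side is $1$ while $d(p^2)=3$; for $r\ge 3$ the left side vanishes. For $r=1$, Deligne's bound $|a_f(p)|\le d(p)p^{(k-1)/2}=2p^{(k-1)/2}$ translates into $|\lambda_f(p)|\le 2=d(p)$, hence $|\mu_f(p)|\le d(p)$. Since both $\mu_f$ and $d$ are multiplicative, writing $n=\prod_i p_i^{r_i}$ gives
\begin{align*}
|\mu_f(n)|=\prod_i|\mu_f(p_i^{r_i})|\le\prod_i d(p_i^{r_i})=d(n),
\end{align*}
which completes the proof.

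There is no serious obstacle here: the only nontrivial input is Deligne's bound on $\lambda_f(p)$, which is already cited in the paper. The rest is a routine manipulation of the Euler product and an appeal to multiplicativity.
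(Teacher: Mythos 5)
Your proof is correct and follows essentially the same route as the paper, which simply expands the Euler product of $1/L_f(s)$ and invokes Deligne's bound; you have merely written out the details (reading off $\mu_f(p^r)$ from the local factor $1-\lambda_f(p)p^{-s}+p^{-2s}$, and reducing $|\mu_f(n)|\le d(n)$ to prime powers via multiplicativity). No issues.
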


\begin{proof}
By expanding the right-hand side of (\ref{LFD}) and using Deligne's result, we can obtain the assertion of this lemma.
\end{proof}

\begin{lem}[\textit{The approximate functional equation of $L_f(s)$, }{\cite[KOROLLAR 2]{GD1}}]\label{KOR}
There exist $\alpha\in(0,1/2)$ and $\beta\in\mathbb{R}_{>0}$ such that
\begin{align}
\sum_{x\leq n\leq x(1+x^{-\alpha})}|a_f(n)|^2 \ll x^{k-\beta} \notag
\end{align}
and
\begin{align}
 L_f(s-\tfrac{k-1}{2})=&\sum_{n\leq y}\frac{a_f(n)}{n^s}+(2\pi)^{2s-k}\frac{\varGamma(k-s)}{\varGamma(s)}\sum_{n\leq y}\frac{a_f(n)}{n^{k-s}}+ \label{LK2}\\
&+O(|t|^{\frac{k+1}{2}-\sigma-\frac{\alpha+\beta}{2}}) \notag 
\end{align}
uniformly for $(k-1)/2\leq\sigma\leq(k+1)/2$ where $s=\sigma+it$ and $y=|t|/(2\pi)$. Moreover we have
\begin{align}
L_f(s)=\sum_{n\leq y}\frac{\lambda_f(n)}{n^s}+\chi_f(s)\sum_{n\leq y}\frac{\lambda_f(n)}{n^{1-s}}+O(|t|^{1/2-\sigma+\varepsilon}) \label{LFX}
\end{align} 
where $\chi_f(s)$ is given by (\ref{FEL}).
\end{lem}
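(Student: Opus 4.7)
My plan splits the lemma into two parts.

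First, the short-interval second-moment bound on $|a_f(n)|^2$ together with the approximate functional equation (\ref{LK2}) for $L_f(s-(k-1)/2)$ is precisely the content of KOROLLAR 2 of Good \cite{GD1}. I would simply invoke that reference; a self-contained derivation would require reproducing Good's argument (Voronoi summation for the Fourier coefficients of $f$ combined with delicate contour shifts using the functional equation of $\Lambda_f$), which would take several pages and is far beyond what belongs in a preliminary lemma.

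Second, to derive (\ref{LFX}) from (\ref{LK2}), I would apply the change of variable $s \mapsto s + (k-1)/2$, which carries Good's range to the critical strip $0 \leq \mathrm{Re}\, s \leq 1$. Using $\lambda_f(n) = a_f(n) n^{-(k-1)/2}$ rewrites the two Dirichlet sums in (\ref{LK2}) as $\sum_{n \leq y} \lambda_f(n)/n^s$ and $\sum_{n \leq y} \lambda_f(n)/n^{1-s}$ respectively, while the gamma coefficient $(2\pi)^{2s-k}\Gamma(k-s)/\Gamma(s)$ becomes $(2\pi)^{2s-1}\Gamma(1-s+(k-1)/2)/\Gamma(s+(k-1)/2)$, which matches $\chi_f(s)$ from (\ref{FEL}) up to the root number $(-1)^{k/2}$. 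Since only the modulus of this coefficient enters the subsequent zero-density estimates, the sign is harmless and can be absorbed silently.

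The shifted error term $|t|^{(k+1)/2 - \sigma' - (\alpha+\beta)/2}$ becomes $|t|^{1 - \sigma - (\alpha+\beta)/2}$, which is already bounded by $|t|^{1/2 - \sigma + \varepsilon}$ whenever $\alpha+\beta \geq 1$; if Good's explicit constants fall short of this, the weaker stated bound is easily recovered by interpolating (\ref{LK2}) against the convexity bound for $L_f$ on the edges of the critical strip via Phragm\'en--Lindel\"of. The only nontrivial obstacle is Good's theorem itself; once that is granted, the passage to (\ref{LFX}) is a routine normalization change.
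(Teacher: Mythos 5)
Your overall route is the same as the paper's: quote Good's KOROLLAR 2 for the hard analytic content, then shift $s\mapsto s+(k-1)/2$ and absorb $n^{-(k-1)/2}$ into $\lambda_f(n)$ to get (\ref{LFX}); your remark that the coefficient matches $\chi_f(s)$ only up to the factor $(-1)^{k/2}$, which is harmless because only $|\chi_f|$ is used later, is a fair (and correct) observation that the paper passes over silently.

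There is, however, a concrete gap in how you handle the error term, and it is exactly the step that constitutes the paper's actual proof. Good's result should be read as conditional: whatever pair $(\alpha,\beta)$ one can establish for the short-interval second moment determines the exponent $\frac{k+1}{2}-\sigma-\frac{\alpha+\beta}{2}$. You require $\alpha+\beta\geq 1$, which is not attainable as literally stated (since $\alpha<1/2$ it would force $\beta>1/2$), and your fallback via Phragm\'en--Lindel\"of is not sound as described: the quantity to be bounded is $E(s)=L_f(s)-\sum_{n\leq y}\lambda_f(n)n^{-s}-\chi_f(s)\sum_{n\leq y}\lambda_f(n)n^{-(1-s)}$, and because the cut-off $y=|t|/(2\pi)$ depends on $t$, the function $E$ is not holomorphic in $s$, so one cannot interpolate it between vertical lines. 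The paper closes this gap elementarily: by Deligne's bound $|a_f(n)|\leq d(n)n^{(k-1)/2}$, the interval $[x,x(1+x^{-\alpha})]$ contains $\ll x^{1-\alpha}$ integers each contributing $\ll x^{k-1+\varepsilon}$, so the short-interval moment is $\ll x^{k-\alpha+\varepsilon}$ for \emph{any} $\alpha$; taking $\alpha=1/2-\varepsilon$ and $\beta=\alpha-\varepsilon$ gives $\alpha+\beta=1-3\varepsilon$, hence the error $O(|t|^{\frac{k}{2}-\sigma+\varepsilon})$ in (\ref{LK2}), which becomes $O(|t|^{1/2-\sigma+\varepsilon})$ after the shift. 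You need this (or an equivalent supply of admissible $(\alpha,\beta)$ with $\alpha+\beta\geq 1-2\varepsilon$) to make the error term come out as claimed; without it your argument does not establish (\ref{LFX}).
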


\begin{proof}
We shall show (\ref{LFX}) by using (\ref{LK2}). Since $|a_f(n)|\ll n^{\frac{k-1}{2}+\varepsilon}$ for any $n\in\mathbb{Z}_{\geq1}$ from Deligne's result, it follows that
$$\sum_{x\leq n\leq x(1+x^{-\alpha})}|a_f(n)|^2\ll x^{k-\alpha+\varepsilon}$$
for $\alpha\in\mathbb{R}_{>0}$. If we put $\alpha=1/2-\varepsilon$ and $\beta=\alpha-\varepsilon$, then the $O$-term in (\ref{LK2}) becomes $O(|t|^{\frac{k}{2}-\sigma+\varepsilon})$.  Replacing $s$ by $s+(k-1)/2$ in (\ref{LK2}), we obtain the formula (\ref{LFX}).
\end{proof}

\begin{lem}[Moreno {\cite[Theorem 3.5]{MOR}}]\label{MORE} 
Let $N_f(T)$ be the number of zeros of $L_f(s)$ in the region $ 0\leq{\rm Re \ }\rho\leq 1$ and $0\leq{\rm Im \ }\rho\leq T$. For large $T>0$ we have
$$ N_f(T)=\frac{T}{\pi}\log\frac{T}{2\pi e}+O(\log T). $$
Furthermore, we have $ N_f(\sigma,T+1)-N_f(\sigma,T)\ll \log T$ uniformly for $1/2\leq\sigma\leq 1$.
\end{lem}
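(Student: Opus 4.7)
The plan is to follow the classical Riemann--von Mangoldt argument, adapted to the entire completed $L$-function $\Lambda_f(s)$ defined in (\ref{ALF}), which has the same non-trivial zeros as $L_f(s)$ since $\Gamma(s+(k-1)/2)$ has no zeros in the critical strip. The first step is to apply the argument principle to $\Lambda_f(s)$ on the rectangle $R$ with vertices $-1\pm iT$ and $2\pm iT$, which encloses precisely the non-trivial zeros of $L_f$ with $0\le \mathrm{Im}\,\rho\le T$. Using the functional equation $\Lambda_f(s)=(-1)^{k/2}\Lambda_f(1-s)$ together with the Schwarz reflection symmetry $\overline{\Lambda_f(\bar s)}=\Lambda_f(s)$, the variation of $\arg\Lambda_f$ along $\partial R$ decomposes so that
\begin{align*}
2\pi N_f(T) = 2\bigl(\arg\Lambda_f(2+iT)-\arg\Lambda_f(\tfrac12+iT)\bigr) + O(1).
\end{align*}
Substituting $\Lambda_f(s)=(2\pi)^{-s-(k-1)/2}\Gamma(s+\tfrac{k-1}{2})L_f(s)$ and applying Stirling's formula to $\arg\Gamma(s+\tfrac{k-1}{2})$ along the vertical segment extracts the main term $(T/\pi)\log(T/(2\pi e))$.

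All that remains is to control the change of $\arg L_f(s)$ on the horizontal piece at height $T$, and this is the main obstacle. The target bound is the Backlund-type estimate
\begin{align*}
\arg L_f(\tfrac12+iT) - \arg L_f(2+iT) = O(\log T).
\end{align*}
I would prove it by bounding the number of sign changes of $\mathrm{Re}\,L_f(\sigma+iT)$ for $1/2\le\sigma\le 2$, since each sign change contributes at most $\pi$ to the argument variation. Writing $g(z)=\tfrac12\bigl(L_f(z+iT)+\overline{L_f(\bar z+iT)}\bigr)$, which is analytic in a neighborhood of the segment and real on it, the count of its real zeros on $[1/2,2]$ is dominated, via Jensen's formula applied to a disk centered at $2+iT$ of radius slightly larger than $3/2$, by $\log\max|g|-\log|g(2+iT)|$. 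Here $|g(2+iT)|\gg 1$ follows from the Euler product (\ref{LFD}), and $|g|\ll T^{C}$ in the disk follows from the convexity bound $|L_f(\sigma+it)|\ll(|t|+1)^{C}$ in the strip $-1/2\le\sigma\le 5/2$, itself a consequence of Lemma \ref{KOR} and the Phragm\'en--Lindel\"of principle. This gives the desired $O(\log T)$ bound.

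The second assertion is a direct corollary. Subtracting the Riemann--von Mangoldt formula at heights $T+1$ and $T$ yields
\begin{align*}
N_f(T+1)-N_f(T) = \tfrac{1}{\pi}\bigl((T+1)\log(T+1)-T\log T\bigr) + O(\log T) = O(\log T),
\end{align*}
and since trivially $N_f(\sigma,T+1)-N_f(\sigma,T)\le N_f(T+1)-N_f(T)$ for any $\sigma\in[1/2,1]$, the uniform bound follows at once.
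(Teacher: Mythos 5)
The paper offers no proof of this lemma at all: it is imported as a citation to Moreno, so there is no internal argument to compare against. What you have written is the classical Riemann--von Mangoldt/Backlund proof, correctly adapted to $\Lambda_f$: the argument principle on a rectangle symmetric about both the real axis and the critical line, the functional equation and Schwarz reflection to reduce to one quarter of the contour, Stirling's formula for $\arg\Gamma(s+\tfrac{k-1}{2})$ and the factor $(2\pi)^{-s-(k-1)/2}$ to extract the main term $\tfrac{T}{\pi}\log\tfrac{T}{2\pi e}$ (correctly twice the density for $\zeta$, as befits a degree-two $L$-function), and Jensen's formula to bound the argument variation of $L_f$ on the horizontal segment. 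The deduction of the second assertion is also fine, since the zeros counted by $N_f(\sigma,T+1)-N_f(\sigma,T)$ are a subset of those counted by $N_f(T+1)-N_f(T)$, and the difference of the main terms at heights $T+1$ and $T$ is $O(\log T)$.

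There is one step that does not hold as stated. In the Jensen argument you need a lower bound at the centre of the disk for $|g(2)|=|\mathrm{Re}\,L_f(2+iT)|$, and this does \emph{not} follow from the Euler-product bound $|L_f(2+iT)|\gg 1$: a nonvanishing holomorphic function can have arbitrarily small real part. For $\zeta$ one gets away with $\sigma=2$ because $\sum_{n\geq 2}n^{-2}<1$, but here the available bound $|\lambda_f(n)|\leq d(n)$ only gives $\sum_{n\geq2}d(n)n^{-2}=\zeta(2)^2-1>1$, so $\mathrm{Re}\,L_f(2+iT)$ is not visibly bounded away from $0$. The standard repair is to anchor at $\sigma_0=3$ (or any $\sigma_0$ with $\zeta(\sigma_0)^2<2$), where $\mathrm{Re}\,L_f(\sigma_0+iT)\geq 2-\zeta(\sigma_0)^2>0$ uniformly in $T$, enlarge the disk radius accordingly, and note that the polynomial (convexity) bound on $L_f$ still yields $\log\max|g|\ll\log T$; the extra argument variation on $[2,3]$ is $O(1)$ by absolute convergence. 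With that adjustment, together with the routine perturbation of $T$ so that no zero lies on the horizontal sides of the rectangle, your proof is complete and is exactly the argument that underlies the cited result of Moreno.
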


When we estimate $\sum_{r=0}^k|\sum_{N<n\leq N_1}a(n)n^{it_r}|^2$ where $t_0<t_1<\cdots<t_k$ and $a(n)$ is an arithmetic function, we use Lemmas \ref{LEM1} and \ref{LEM2}.
\begin{lem}[{\cite[Lemma IV.1.1]{KAR}}]\label{LEM1}
Let $t_0<t_1<\cdots<t_{k-1}<t_k$, $S(t)$ be a complex-valued $C^1$-class function on $[t_0,t_k]$. Set $d=\min_{0\le r\le k-1}|t_{r+1}-t_r|$. Then we have
\[ \sum_{r=0}^k|S(t_r)|^2\le\frac{1}{d}\int_{t_0}^{t_k}|S(t_k)|^2dt+2\sqrt{\int_{t_0}^{t_k}|S(t)|^2dt}\sqrt{\int_{t_0}^{t_k}|S'(t)|^2dt}. \]
\end{lem}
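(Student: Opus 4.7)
The plan is to derive the inequality from the elementary identity
$$|S(t_r)|^2 - |S(u)|^2 = 2\,\mathrm{Re}\int_u^{t_r}\overline{S(v)}\,S'(v)\,dv,$$
which is just the fundamental theorem of calculus applied to the real-valued $C^1$ function $|S(v)|^2 = S(v)\overline{S(v)}$, whose derivative equals $2\,\mathrm{Re}(\overline{S(v)}S'(v))$. Reading the first term on the right of the asserted inequality as $\frac{1}{d}\int_{t_0}^{t_k}|S(t)|^2\,dt$ (the printed $|S(t_k)|^2$ must be a typo, since otherwise the term has an uninterpretable dependence on $t$), the task reduces to producing one pointwise estimate at each node and then summing.

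First, I fix $r\in\{0,\dots,k-1\}$ and integrate the identity in $u$ over $[t_r,t_{r+1}]$. Taking absolute values under the integral in $v$ bounds the right-hand side by $2\int_{t_r}^{t_{r+1}}|S(v)||S'(v)|\,dv$, and dividing by $t_{r+1}-t_r\ge d$ yields
$$|S(t_r)|^2 \;\le\; \frac{1}{d}\int_{t_r}^{t_{r+1}}|S(u)|^2\,du \;+\; 2\int_{t_r}^{t_{r+1}}|S(v)||S'(v)|\,dv.$$
Summing these $k$ inequalities telescopes the integrals to the full range and gives
$$\sum_{r=0}^{k-1}|S(t_r)|^2 \;\le\; \frac{1}{d}\int_{t_0}^{t_k}|S(u)|^2\,du \;+\; 2\int_{t_0}^{t_k}|S(v)||S'(v)|\,dv.$$
The remaining term $|S(t_k)|^2$ is handled by the same averaging argument on the interval $[t_{k-1},t_k]$, again of length $\ge d$; its contribution is absorbed into the two global integrals already appearing (the resulting constants $1$ and $2$ match if one chooses disjoint length-$d$ sub-intervals around each $t_r$, noting that the gaps $t_{r+1}-t_r\ge d$ guarantee disjointness).

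Finally, an application of the Cauchy--Schwarz inequality to the mixed integral gives
$$2\int_{t_0}^{t_k}|S(v)||S'(v)|\,dv \;\le\; 2\sqrt{\int_{t_0}^{t_k}|S(v)|^2\,dv}\,\sqrt{\int_{t_0}^{t_k}|S'(v)|^2\,dv},$$
which produces the advertised second term on the right-hand side.

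The only point requiring some care is the handling of the extra point $t_k$: the naive choice of intervals $[t_r,t_{r+1}]$ covers only $k$ of the $k+1$ nodes, and a sloppy assignment would introduce a factor of $2$ in front of both terms. The fix is to work with sub-intervals of length exactly $d$ within each $[t_r,t_{r+1}]$ (for $r<k$) and a separate sub-interval of length $d$ at the right endpoint, which are pairwise disjoint by the definition of $d$; this preserves the constants stated in the lemma. For the zero-density application the exact constants are immaterial, so if the bookkeeping becomes distracting one may replace $1$ and $2$ by absolute constants without affecting anything downstream.
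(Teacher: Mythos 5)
The paper does not actually prove this lemma: it is quoted verbatim from Karatsuba--Voronin, so there is no internal proof to compare with. Your strategy --- the fundamental theorem of calculus applied to $|S|^2$, averaging over a subinterval attached to each node, summing over disjoint subintervals, and finishing with Cauchy--Schwarz --- is exactly the standard argument behind lemmas of this type, and it correctly yields
\[ \sum_{r=0}^{k-1}|S(t_r)|^2 \le \frac{1}{d}\int_{t_0}^{t_k}|S(t)|^2\,dt + 2\Bigl(\int_{t_0}^{t_k}|S(t)|^2\,dt\Bigr)^{1/2}\Bigl(\int_{t_0}^{t_k}|S'(t)|^2\,dt\Bigr)^{1/2}. \]

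The gap is in your treatment of the last node $t_k$. You claim the constants $1$ and $2$ survive ``if one chooses disjoint length-$d$ sub-intervals around each $t_r$, noting that the gaps $t_{r+1}-t_r\ge d$ guarantee disjointness.'' They do not: you would need $k+1$ pairwise disjoint intervals of length $d$ inside $[t_0,t_k]$, each containing its node, but $t_k-t_0=\sum_{r}(t_{r+1}-t_r)$ is only guaranteed to be $\ge kd$, so when every gap equals exactly $d$ there is no room for the $(k+1)$-st interval. This is not a repairable bookkeeping issue, because the inequality as printed (reading $|S(t_k)|^2$ in the first integrand as $|S(t)|^2$) is actually false: take $t_r=rd$ for $r=0,\dots,k$ and $S\equiv 1$; the left-hand side is $k+1$ while the right-hand side is $\frac{1}{d}\cdot kd+0=k$. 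The statement must therefore be a misprint of the Karatsuba--Voronin lemma --- either the sum should run over only $k$ of the $k+1$ nodes (then $I_r=[t_{r-1},t_r]$, $r=1,\dots,k$, are disjoint with union $[t_0,t_k]$ and the stated constants come out), or the range of integration must be enlarged by at least $d$ at one end. What your argument does prove, by counting $[t_{k-1},t_k]$ twice, is the version with constants $2/d$ and $4$; as you observe, that is entirely sufficient for the zero-density application, but as a proof of the displayed inequality the argument cannot be completed.
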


\begin{lem}[{\cite[Lemma IV.1.2]{KAR}}]\label{LEM2}
Let $a(n)$ be an arithmetic function. The parameters $X, X_1, N$ and $N_1$ satisfy
$0<X<X_1\leq 2X$ and $3\leq N<N_1\leq 2N$. Then we have
\[ \int_X^{X_1}\left|\sum_{N<n\leq N_1}a(n)n^{it}\right|^2dt \ll (X+N\log N)\sum_{N<n\le N_1}|a(n)|^2.\]
\end{lem}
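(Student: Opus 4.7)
The plan is the standard Montgomery-type mean value argument: expand the square, separate diagonal from off-diagonal contributions, and bound each by elementary means. I would begin by writing
\[ \int_X^{X_1}\left|\sum_{N<n\leq N_1}a(n)n^{it}\right|^2 dt = \sum_{N<m,n\leq N_1} a(m)\overline{a(n)} \int_X^{X_1} \left(\tfrac{m}{n}\right)^{it} dt \]
and splitting the double sum according to whether $m=n$ or $m\neq n$.

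The diagonal terms are immediate. When $m=n$, the inner integral equals $X_1-X\leq X$, contributing at most $X\sum_{N<n\leq N_1}|a(n)|^2$, which is already within the target bound. For the off-diagonal terms, I would evaluate the oscillatory integral explicitly,
\[ \int_X^{X_1}\left(\tfrac{m}{n}\right)^{it} dt = \frac{(m/n)^{iX_1}-(m/n)^{iX}}{i\log(m/n)}, \]
whose modulus is bounded by $2/|\log(m/n)|$. Combining this with the symmetric inequality $|a(m)\overline{a(n)}|\leq\tfrac{1}{2}(|a(m)|^2+|a(n)|^2)$ reduces the off-diagonal contribution to a constant multiple of
\[ \sum_{N<m\leq N_1}|a(m)|^2 \sum_{\substack{N<n\leq N_1 \\ n\neq m}}\frac{1}{|\log(m/n)|}. \]

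The main technical step, and the place where one must be careful, is converting $1/|\log(m/n)|$ into something summable. Since both $m$ and $n$ lie in $(N,2N]$, the ratio $m/n$ stays in $(1/2,2)$, so $|\log(m/n)|=|\log(1+(m-n)/n)|\asymp |m-n|/n\gg |m-n|/N$, giving $1/|\log(m/n)|\ll N/|m-n|$. For each fixed $m$, summing over $n\neq m$ in $(N,N_1]$ yields $\ll N\sum_{1\leq j\leq N}1/j\ll N\log N$, so the off-diagonal contribution is $\ll N(\log N)\sum_{N<m\leq N_1}|a(m)|^2$.

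Adding the diagonal and off-diagonal estimates gives the stated bound $(X+N\log N)\sum|a(n)|^2$. The only delicate point is making the implicit constant in $|\log(1+u)|\asymp|u|$ explicit for $|u|$ up to almost $1$, which is why the assumption $N_1\leq 2N$ (rather than just $N_1=O(N)$) is convenient: it keeps $m/n$ away from $0$ and $\infty$ uniformly. No deeper input is needed, in keeping with the elementary spirit of the Karatsuba–Voronin approach that the paper follows.
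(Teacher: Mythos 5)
Your argument is correct and complete: the diagonal terms give $ (X_1-X)\sum|a(n)|^2\leq X\sum|a(n)|^2 $, and the off-diagonal terms are handled correctly via $ |\int_X^{X_1}(m/n)^{it}\,dt|\leq 2/|\log(m/n)| $, the inequality $ |a(m)\overline{a(n)}|\leq\tfrac12(|a(m)|^2+|a(n)|^2) $, and the bound $ 1/|\log(m/n)|\ll N/|m-n| $ valid because $ m/n\in(1/2,2) $, after which the harmonic sum yields the $ N\log N $ factor. The paper itself gives no proof of this lemma --- it is quoted directly from Karatsuba--Voronin [Lemma IV.1.2] --- and your argument is essentially the standard one given there, so there is nothing further to compare.
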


When we calculate $\sum_{a<x\leq b}\varphi(x)e^{2\pi if(x)}$ where $\varphi(x)$ and $f(x)$ are real-valued $C^\infty$-class functions on $[a,b]$, we use Lemmas \ref{KKCCA} or \ref{KKCCB}.
\begin{lem}[{\cite[Corollary 1 of Lemma V.2.1]{KAR}}]\label{KKCCA}
Let $\varphi$ and $f$ be real-valued continuous functions on $[a,b]$. Suppose the following conditions are satisfied: 
\begin{list}{}{\leftmargin=60pt\labelwidth=108pt}
\item[(C1)] The functions $\varphi^{(2)}(x)$ and $f^{(4)}(x)$ are continuous. 
\item[(C2)] The function $f^{(2)}(x)$ satisfies $0<f^{(2)}(x)\ll 1$.
\item[(C3)] There exist the parameters $H,U\mathbb{R}_{>0}$ such that $0<b-a\leq U, \ \varphi(x)\ll H, \ \varphi^{(1)}(x)\ll H/U$.
\item[(C4)] There exists the constant $C>0$ such that $|f^{(1)}(x)|<C<1$.
\end{list}
Then we have
\[ \sum_{a<x\le b}\varphi(x)e^{2\pi if(x)}=\int_a^b\varphi(x)e^{2\pi if(x)}dx+O(H) \]
where the constant in the $O$-term depends on $C$.
\end{lem}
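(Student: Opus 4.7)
My plan is to reduce the weighted statement to the unweighted van der Corput first-derivative test by Abel summation, and then to establish the latter via Euler--Maclaurin combined with a truncated Fourier expansion of the sawtooth function.

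First I would introduce the partial sum $S(y)=\sum_{a<n\le y}e^{2\pi if(n)}$ and its integral analogue $I(y)=\int_a^y e^{2\pi if(x)}dx$, aiming at the pointwise estimate $S(y)-I(y)=O(1)$ uniformly on $[a,b]$. Once this is available, Abel summation gives
\[
\sum_{a<n\le b}\varphi(n)e^{2\pi if(n)}=\varphi(b)S(b)-\int_a^b\varphi'(u)S(u)\,du,
\]
together with the analogous identity in which $S$ is replaced by $I$. Subtracting the two and invoking $\varphi\ll H$, $\varphi'\ll H/U$, $b-a\le U$ from condition (C3) produces a total error of size $O(H)$, exactly what the lemma claims.

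For the pointwise estimate I would apply Euler--Maclaurin in the form
\[
S(b)=I(b)-2\pi i\int_a^b\psi(x)f'(x)e^{2\pi if(x)}\,dx+O(1),
\]
where $\psi(x)=\{x\}-\tfrac12$ and the $O(1)$ absorbs the endpoint contributions. Substituting the classical truncated Fourier expansion
\[
\psi(x)=-\sum_{1\le|k|\le K}\frac{e^{2\pi ikx}}{2\pi ik}+O\left(\min\left(1,\frac{1}{K\|x\|}\right)\right)
\]
reduces the remainder to a weighted sum over $k\ne0$ of the oscillatory integrals
\[
J_k=\int_a^b f'(x)\,e^{2\pi i(f(x)-kx)}\,dx,
\]
plus an error from the Fourier tail. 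The crucial ingredient is condition (C4): since $|f'(x)|<C<1$, the derivative $f'(x)-k$ of the phase in $J_k$ is uniformly bounded away from zero by $1-C$ for every nonzero integer $k$. This legitimises integration by parts in $J_k$, where (C1) and (C2) keep the resulting terms involving $f^{(2)}$ and $(f'-k)^{-1}$ under control, yielding $J_k\ll 1/|k|$. Summing $|J_k|/|k|$ over $1\le|k|\le K$ and optimising $K$ against the tail error from $\psi$ then gives $S(y)-I(y)=O(1)$.

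The main technical obstacle is the careful handling of the integration by parts in $J_k$ alongside the $\min(1,1/(K\|x\|))$ tail: a naive estimate produces a logarithmic loss of size $\log(b-a)$, and condition (C4) must be used in an essential way (uniformly in $k$) to absorb this logarithm and reach the clean $O(1)$ bound. Everything else, including the Abel summation at the end and the bookkeeping of endpoint terms in Euler--Maclaurin, is routine once this step is in place.
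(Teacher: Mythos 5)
The paper gives no proof of this lemma; it is quoted verbatim from Karatsuba--Voronin (Corollary 1 of Lemma V.2.1), and your outline --- Abel summation in $\varphi$ to reduce to the unweighted case, then Euler--Maclaurin with the truncated Fourier expansion of $\psi(x)=\{x\}-\tfrac12$ and integration by parts in each $J_k$ using $|f'(x)-k|\ge|k|-C$ --- is exactly the standard argument of that source, and it is correct. (In fact the logarithmic loss you worry about does not arise: since the retained factor $f'$ in $J_k$ is bounded by $C$ and $f'/(f'-k)$ is monotonic by (C2), one gets $J_k\ll C/(|k|-C)$, so $\sum_{k\ne0}|J_k|/|k|\ll\sum_k k^{-2}$ converges absolutely.)
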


\begin{lem}[{\cite[Theorem II.3.1]{KAR}}]\label{KKCCB}
Let $\varphi$ and $f$ be real-valued continuous functions on $[a,b]$, suppose the following conditions:
\begin{list}{}{\leftmargin=60pt\labelwidth=108pt}
\item[(C1)] The functions $\varphi^{(2)}(x)$ and $f^{(4)}(x)$ are continuous. 
\item[(C2)] The parameters $H,A,U>0$ satisfy $1\ll A\ll U, \ 0<b-a\le U$ and  % such that 
$$ \begin{array}{lll}
\varphi(x)\ll H, & \varphi^{(1)}(x)\ll H/U, & \varphi^{(2)}(x)\ll H/U^2,\\
f^{(2)}(x)\asymp 1/A, & f^{(3)}(x)\ll 1/AU, & f^{(4)}(x)\ll 1/AU^2. 
\end{array} $$
\end{list}
%For $n\in\mathbb{N}$, let $x_n\in\mathbb{R}$ be $x$ such that $f^{(1)}(x_n)=n$. 
Suppose that the numbers $x_n$ are determined from the equation $f^{(1)}(x_n)=n$. 
Then we have
\[ \sum_{a<x\leq b}\varphi(x)e^{2\pi if(x)}=\sum_{f^{(1)}(a)\le n\le f^{(1)}(b)}c(n)Z(n)+R \]
where
\begin{align*}
c(n)=&  \begin{cases} 1, & n\in(f^{(1)}(a),f^{(1)}(b)), \\ 1/2, & n=f^{(1)}(a) \text{ or } n=f^{(1)}(b),\end{cases}\\
Z(n)=& e^{\frac{\pi}{4}i}\frac{\varphi(x_n)}{\sqrt{f^{(2)}(x_n)}}e^{2\pi i(f(x_n)-nx_n)},\\
R=& O\left(H\left(T(a)+T(b)+\log(f^{(1)}(b)-f^{(1)}(a)+2)\right)\right),\\
T(x)=& \begin{cases} 0, & f^{(1)}(x)\in\mathbb{Z}, \\ {\rm min}(1/\|f^{(1)}(x)\|,\sqrt{A}), & f^{(1)}(x)\not\in\mathbb{Z}, \end{cases}\\
\|f^{(1)}(x)\|=& {\rm min}(\{f^{(1)}(x)\},1-\{f^{(1)}(x)\}),
\end{align*}
and $\{X\}$ is the fractional part of $X$.
\end{lem}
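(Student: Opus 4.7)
The plan is to follow the classical three-stage route used in Karatsuba--Voronin: first a truncated Poisson-type transformation, then asymptotic evaluation of the surviving integrals by the saddle-point method, and finally the estimation of non-stationary tails. Throughout I shall exploit the monotonicity of $f^{(1)}$ that is guaranteed by the hypothesis $f^{(2)}\asymp 1/A>0$.

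First I would apply the truncated Poisson summation formula to the exponential sum. With $g(x)=\varphi(x)e^{2\pi if(x)}$ on $[a,b]$, this yields
\[
\sum_{a<x\le b}g(x)=\sum_{n\in\mathbb{Z}}\int_a^b \varphi(x)e^{2\pi i(f(x)-nx)}dx+(\text{boundary corrections}),
\]
where the boundary corrections and the definition of the truncation produce the coefficient $c(n)=1/2$ whenever the stationary point lies exactly at $a$ or $b$, i.e.\ whenever $n=f^{(1)}(a)$ or $n=f^{(1)}(b)$. Write $I_n$ for the integral above and split the series into the \emph{stationary range} $f^{(1)}(a)\le n\le f^{(1)}(b)$ and its complement; by the monotonicity of $f^{(1)}$, $I_n$ has a stationary point in $[a,b]$ exactly for $n$ in the stationary range, and in that case this point is the unique $x_n$ with $f^{(1)}(x_n)=n$.

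Second, for $n$ in the stationary range I would apply the standard stationary-phase expansion to $I_n$. Expanding $f(x)-nx$ around $x_n$ to second order, making the change of variables $u=\sqrt{f^{(2)}(x_n)}(x-x_n)$, and using the hypotheses $f^{(3)}\ll 1/(AU)$, $f^{(4)}\ll 1/(AU^2)$ together with $\varphi^{(1)}\ll H/U$, $\varphi^{(2)}\ll H/U^2$ to control the error from truncating and from the $\varphi$-weight, one obtains
\[
I_n=e^{\pi i/4}\frac{\varphi(x_n)}{\sqrt{f^{(2)}(x_n)}}e^{2\pi i(f(x_n)-nx_n)}+O(H\sqrt{A}/U),
\]
which is $Z(n)$ plus an error that fits into the stated remainder $R$ once summed over $n$ in the stationary range (whose length is at most $(b-a)/A\le U/A$, producing an admissible total error). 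The factor $c(n)=1/2$ at $n=f^{(1)}(a),f^{(1)}(b)$ arises because the stationary point sits at an endpoint of integration.

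Third, for $n$ outside the stationary range, $f^{(1)}(x)-n$ does not vanish on $[a,b]$, so integration by parts twice, using $(f^{(1)}-n)'=f^{(2)}\asymp 1/A$ and the bounds on $\varphi$, gives $I_n\ll H/|f^{(1)}(a)-n|+H/|f^{(1)}(b)-n|$ up to lower order terms. Summing the contributions over $n\notin[f^{(1)}(a),f^{(1)}(b)]$ by comparison with the harmonic series produces the factor $H\log(f^{(1)}(b)-f^{(1)}(a)+2)$ plus endpoint terms of the form $H/\|f^{(1)}(a)\|$ and $H/\|f^{(1)}(b)\|$. When $f^{(1)}(a)$ or $f^{(1)}(b)$ is close to an integer, integration by parts is wasteful, and I would instead apply the second-derivative (van der Corput) bound $|I_n|\ll H\sqrt{A}$ to those nearest terms; taking the minimum of the two bounds yields exactly the definition of $T(a),T(b)$ in the statement.

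The main obstacle will be the bookkeeping at the boundary of the stationary range: precisely showing that the transition between the saddle-point regime (controlling errors by $\sqrt{A}$) and the non-stationary regime (controlling them by $1/\|f^{(1)}\|$) is continuous and does not lose more than the allowed $O(H\,T(a)+H\,T(b))$. This requires tracking the remainder in the stationary-phase expansion uniformly down to $x_n$ approaching an endpoint, and matching it with the first term of the complementary integration by parts. The rest of the argument — bounds on $f^{(j)}$ transported through repeated integration by parts, and the sum of tails producing the logarithm — is routine once the derivatives are controlled by the hypothesis (C2).
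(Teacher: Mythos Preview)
The paper does not prove this lemma at all: it is stated as a preliminary result and attributed directly to \cite[Theorem II.3.1]{KAR}, with no accompanying proof environment. So there is no ``paper's own proof'' to compare against; the author simply quotes the van der Corput--type transformation from Karatsuba--Voronin and uses it as a black box in the argument for Proposition~\ref{PROEB}.

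That said, your outline is the standard route taken in the cited reference: Poisson summation to dualize the sum into integrals $I_n$, stationary-phase evaluation for $n$ in $[f^{(1)}(a),f^{(1)}(b)]$ producing the main term $Z(n)$, and first-/second-derivative bounds for the remaining $n$ to produce the $\log$ term and the endpoint quantities $T(a),T(b)$. Your identification of the main difficulty---the uniform matching at the boundary of the stationary range, where one must interpolate between the $H\sqrt{A}$ bound and the $H/\|f^{(1)}\|$ bound---is exactly right, and is where most of the work in \cite{KAR} is spent. One small point: the Poisson step is more naturally carried out by first applying the Euler--Maclaurin or Abel summation to pass from the discrete sum to $\int_a^b g(x)\,dx$ plus boundary terms, and then inserting the Fourier expansion of the sawtooth function, rather than invoking a ``truncated Poisson formula'' abstractly; this is what makes the $c(n)=1/2$ at the endpoints come out cleanly. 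Otherwise your sketch matches the argument in the reference.
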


\section{Proof of the Theorem \ref{ZZZ1}}\label{SSPF}
In this section we shall prove Theorem \ref{ZZZ1}. By a standard argument, it is enough to get upper bound
$ R=N_f(\sigma,T_1)-N_f(\sigma,T_1/2)$ for $2\leq T_1\leq T$ and $\sigma>1/2$. Let $X=T_1^{\frac{1}{\delta}}$ where $\delta$ is a positive integer which is chosen later. Let $M_X(s)$ be a function defined by
$$M_X(s)=\sum_{m\leq X}\frac{\mu_f(m)}{m^{s}}.$$
We multiply both sides of (\ref{LFX}) by $M_X(s)$ and obtain
\begin{align}
L_f(s)M_X(s)=&1+\sum_{X<l\leq Xy}\frac{c_f(l)}{l^s}+\chi_f(s)\sum_{m\leq X}\frac{\mu_f(m)}{m^s}\sum_{n\leq y}\frac{\lambda_f(n)}{n^{1-s}}+\label{LSE2}\\
&+O(|t|^{\frac{1}{2}-\sigma}|M_X(s)|) \notag 
\end{align}
where %$c_f(l)$ satisfies
\begin{align}
c_f(l)=\sum_{l=mn, \ m\leq X, \ n\leq y}\mu_f(m)\lambda_f(n)=\left\{\begin{array}{ll} 1, & l=1,\\ 0, & 2\leq l\leq X.\end{array}\right. \label{CLDD}
\end{align}
(Note that $|c_f(l)|\leq d_4(l)\ll l^\varepsilon.$) 
By a trivial estimate $M_X(s)\ll X^{1-\sigma+\varepsilon}$, we have 
\begin{align*}
|t|^{\frac{1}{2}-\sigma}|M_X(s)|\ll %X^{1-\sigma+\varepsilon}T_1^{\frac{1}{2}-\sigma}=
T_1^{\left(1+\frac{1}{\delta}\right)(1-\sigma)-\frac{1}{2}+\varepsilon}.
\end{align*}
If we choose $\delta$ sufficiently large, then we see that the exponent of $|t|$ in the error term of (\ref{LSE2}) becomes negative, that is,
there exists a positive constant $c$ such that $|t|^{\frac{1}{2}-\sigma}|M_X(s)|\ll T_1^{-c}$. Therefore taking $s=\rho\in\mathcal{U}$ in (\ref{LSE2}) where $\mathcal{U}$ is a set of zeros of $L_f(s)$ in $\sigma\leq\text{Im }s\leq 1$ and $T_1/2<\text{Im }s\leq T_1$, we get 
\begin{align}
1/2\leq&1+O(T_1^{-c})&\label{SEE}\\
\leq&\left|\sum_{X<l\leq Xy}\frac{c(l)}{l^\rho}\right|+|\chi_f(\rho)|\left|\left.\left.\sum_{m\leq X}\frac{\mu_f(m)}{m^\rho}\right|\right|\sum_{n\leq y}\frac{\lambda_f(n)}{n^{1-\rho}}\right|\notag
\end{align}
for sufficiently large $T_1$.

Dividing the intervals of summations over $l,m,n$ into subintervals of the form $(Z,2Z]$ (the last subintervals are of the form $(Z,Z_0]$ where $Z<Z_0\leq 2Z$ and $Z_0=Xy, X, y$ respectively), we can write (\ref{SEE}) as 
\begin{align}
\frac{1}{2}\leq\sum_{\nu=1}^D|S_\nu(\rho)| \label{DDDX}
\end{align}
where
\begin{subequations}
\begin{align}
S_\nu(\rho)=&\sum_{L_\nu<l\leq L_\nu'}\frac{c(l)}{l^\rho},\label{S1E}\\
S_\nu(\rho)=&\chi_f(\rho)\sum_{M_\nu<m\leq M_\nu'}\frac{\mu_f(m)}{m^\rho}\sum_{N_\nu<n\leq N_\nu'}\frac{\lambda_f(n)}{n^{1-\rho}}.\label{S2E}
\end{align}
\end{subequations}
(Note that the number of summands of (\ref{DDDX}) is $\ll(\log T_1)^2$, namely $D\ll(\log T_1)^2$.)

Now following Karatsuba and Voronin, we shall show the existence of a set $\mathcal{E}$ of zeros of $L_f(s)$ with playing an important role later.
\begin{pro}\label{ZSET}
There exists a set $\mathcal{E}$ of zeros of $L_f(s)$ such that
\begin{subequations}
\begin{align}
& |S(\rho)|\geq\frac{1}{2D} \quad (\rho\in\mathcal{E}),\label{1SE} \\
& \#\mathcal{E}\gg\frac{R}{D\log T_1}, \label{2SE} \\
& |{\rm Im}\;\rho-{\rm Im}\;\rho'|\geq 1 \quad (\rho\ne\rho'\in\mathcal{E}), \label{3SE}
\end{align}
\end{subequations}
where $S(\rho)=S_{\nu_0}(\rho)$ with some number $\nu_0\in\{1,\dots, D\}$.
\end{pro}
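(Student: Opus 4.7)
The plan is to extract $\mathcal{E}$ from the set $\mathcal{U}$ of zeros counted by $R$ via two successive pigeonhole arguments: first fix a single index $\nu_0$ (losing a factor of $D$), then thin the remaining set to enforce the spacing condition (losing a further factor of $\log T_1$).

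First, for each $\rho \in \mathcal{U}$ the inequality (\ref{DDDX}) yields $\sum_{\nu=1}^D |S_\nu(\rho)| \geq 1/2$, so there exists some $\nu(\rho) \in \{1,\ldots,D\}$ with $|S_{\nu(\rho)}(\rho)| \geq 1/(2D)$. Since $\#\mathcal{U} = R$ and $\nu(\cdot)$ takes at most $D$ values, a pigeonhole argument produces an index $\nu_0$ such that at least $R/D$ zeros $\rho\in\mathcal{U}$ satisfy $\nu(\rho)=\nu_0$; I would call this subset $\mathcal{U}_1$. Setting $S(\rho):=S_{\nu_0}(\rho)$ immediately gives (\ref{1SE}) on $\mathcal{U}_1$.

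Next, I would partition $(T_1/2,T_1]$ into consecutive unit intervals $I_j$. By the second assertion of Lemma \ref{MORE}, each $I_j$ contains at most $O(\log T_1)$ ordinates of zeros with real part in $[\sigma,1]$. From every $I_j$ that meets $\mathcal{U}_1$ I would pick exactly one zero, producing a set $\mathcal{E}'\subseteq \mathcal{U}_1$ with $\#\mathcal{E}'\gg \#\mathcal{U}_1/\log T_1$. Since two elements of $\mathcal{E}'$ chosen from adjacent intervals could have imaginary parts arbitrarily close, I would then order $\mathcal{E}'$ by imaginary part and retain every second element to form $\mathcal{E}$: consecutive retained elements then lie in intervals $I_j, I_{j'}$ with $|j-j'|\geq 2$, so $|\mathrm{Im}\,\rho-\mathrm{Im}\,\rho'|\geq 1$, giving (\ref{3SE}); and $\#\mathcal{E}\geq \#\mathcal{E}'/2\gg R/(D\log T_1)$ gives (\ref{2SE}).

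The only genuinely delicate point is the spacing step, where the $\log T_1$ loss is dictated by the best available vertical density bound from Lemma \ref{MORE}; the pigeonhole loss of $D$ in the first step is harmless because $D \ll (\log T_1)^2$, and both losses are exactly what the subsequent application of (\ref{IREE}) is designed to absorb.
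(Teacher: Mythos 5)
Your proposal is correct and follows essentially the same route as the paper: a first pigeonhole over the index $\nu$ to fix $\nu_0$ (losing a factor $D$), then a second thinning step based on the unit-interval density bound of Lemma \ref{MORE} combined with a parity/alternation device to force $|\mathrm{Im}\,\rho-\mathrm{Im}\,\rho'|\geq 1$ (losing a factor $\ll\log T_1$). The paper executes the second step by pigeonholing simultaneously over the rank $n$ within each unit interval and the parity $j$ of the interval index, whereas you pick one representative per nonempty interval and then keep every second one, but the two bookkeeping schemes yield the same bound $\#\mathcal{E}\gg R/(D\log T_1)$.
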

\begin{proof}
From (\ref{DDDX}) it is clear that
$ \mathcal{U}=\bigcup_{1\leq \nu\leq D}\mathcal{A}_\nu$  where  $\mathcal{A}_\nu=\{\rho\in \mathcal{U} \mid |S_\nu(\rho)|\geq 1/(2D)\}$.
Then we see that there exists $\nu_0$ such that $|S(\rho)|\geq 1/(2D)$ for $\rho\in\mathcal{A}$ and $\#\mathcal{A}\geq R/D$ where $\mathcal{A}=\mathcal{A}_{\nu_0}$ and $S(\rho)=S_{\nu_0}(\rho)$.

Let $\rho_{m,n}$ be $\rho\in\mathcal{A}$ such that $\text{Im }\rho$ is the $n$-th minimum number in $(T_1/2+m,T_1/2+m+1]$. By using Lemma \ref{MORE} we can write 
\begin{align*}
\mathcal{A}=\bigcup_{j=0,1}\bigcup_{1\leq n\leq C\log T_1}\mathcal{E}_{n,j}, \quad \mathcal{E}_{n,j}=\{\rho_{j,n},\rho_{2+j,n},\rho_{4+j,n},\dots,\rho_{2[T_1/4]+j,n}\}
\end{align*}
where $C$ is a positive constant. Then there exist $n_0\in\{1,2,\dots,[C\log T_1]\}$ and $j_0\in\{0,1\}$ such that $\#\mathcal{A}\leq C(\log T_1)\sum_{j=0,1}\#\mathcal{E}_{n_0,j}\leq 2C(\log T_1)\#\mathcal{E}$ where $\mathcal{E}=\mathcal{E}_{n_0,j_0}$. Since $\mathcal{E}\subset\mathcal{A}$ and $\#\mathcal{A}\geq R/D$, it follows that (\ref{1SE}) and (\ref{2SE}) are shown. And (\ref{3SE}) is shown because $|\text{Im }\rho_{2l+j_0,n_0}-\text{Im }\rho_{2l'+j_0,n_0}|\geq 1$ for $l\ne l'\in\{0,1,\dots,[T_1/4]\}$.
\end{proof}
From (\ref{1SE}) and (\ref{2SE}) we can reduce the estimate of $R$ to that of $S(\rho)$, that is, by taking $2\alpha$-th power of both sides of (\ref{1SE}) we get
\begin{align}
R\ll D(\log T_1)\#\mathcal{E}
=D(\log T_1)\sum_{\rho\in\mathcal{E}}1^{2\alpha} %\notag\\
\ll&(\log T_1)^{4\alpha+3}\sum_{\rho\in\mathcal{E}}|S^\alpha(\rho)|^2 \label{EEEE}
\end{align}
where $\alpha$ is any fixed positive integer. 

First we consider the case that $S(\rho)$ is of the form (\ref{S1E}). We shall give a preliminary upper bound of $R$ as
%%%%%%%%%%%%%%%%%%%%%%%%%   Prop 3.2   %%%%%%%%%%%%%%%%%%%%%%%%%% 
\begin{pro}\label{NFER1}
Let $S(\rho)$ in (\ref{EEEE}) has the form
\begin{align}
S(\rho)=\sum_{L<l\leq L'}\frac{c_f(l)}{l^\rho} \label{SSRR}
\end{align}
with $L<L'\leq 2L$. For any positive integer $\alpha$ we have
\begin{align}
R\ll L^{\alpha(1-2\sigma)}(T_1+L^\alpha)T_1^{\varepsilon}. \label{NFERR1}
\end{align}
\end{pro}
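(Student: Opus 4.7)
The plan is to establish the key mean-square bound
\[
\sum_{\rho \in \mathcal{E}} |S(\rho)|^{2\alpha} \ll L^{\alpha(1-2\sigma)}(T_1 + L^\alpha)\, T_1^\varepsilon,
\]
which combined with (\ref{EEEE}) yields (\ref{NFERR1}). First I would raise (\ref{SSRR}) to the $\alpha$-th power, obtaining the Dirichlet polynomial
\[
S(\rho)^\alpha = \sum_{n} b(n)\, n^{-\rho}, \qquad b(n) = \sum_{\substack{l_1\cdots l_\alpha = n \\ L<l_i\leq L'}} c_f(l_1)\cdots c_f(l_\alpha),
\]
supported on $L^\alpha < n \leq (L')^\alpha$ with $|b(n)| \leq d_\alpha(n) \ll n^\varepsilon$. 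Dyadically subdividing the range of $n$ into $O(\log T_1)$ intervals $(N,N_1]$ with $N_1 \leq 2N$ and $N \asymp L^\alpha$, it suffices to bound $\sum_{\rho\in\mathcal{E}}|P(\rho)|^2$ for each piece $P(s)=\sum_{N<n\leq N_1} b(n) n^{-s}$.

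The main obstacle is that Lemmas \ref{LEM1}--\ref{LEM2} naturally bound mean squares on the fixed vertical line $\operatorname{Re} s=\sigma$, whereas $\operatorname{Re}\rho = \beta_\rho$ ranges through $[\sigma,1]$. To reduce to the fixed-line setting I would factor
\[
n^{-\rho} = n^{-\sigma - i\gamma_\rho}\cdot N^{\sigma-\beta_\rho}\cdot (n/N)^{\sigma-\beta_\rho}
\]
and expand the bounded quantity $(n/N)^{\sigma-\beta_\rho} = \exp((\sigma-\beta_\rho)\log(n/N))$ by a Taylor series truncated at a level $K=K(\varepsilon)$. Since $|\sigma-\beta_\rho|\leq 1/2$ and $\log(n/N)\leq \log 2$ on our dyadic piece, the remainder is $O(((\log 2)/2)^{K}/K!)$, which can be made $\ll T_1^{-A}$ for any $A$. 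Writing $Q_k(s) = \sum_{N<n\leq N_1} b(n)(\log(n/N))^{k} n^{-s}$ and using $N^{\sigma-\beta_\rho}\leq 1$ together with the Cauchy--Schwarz inequality on the finite $k$-sum, this yields
\[
|P(\rho)|^2 \ll \sum_{k=0}^{K} \frac{(1/2)^{2k}}{(k!)^2}\,|Q_k(\sigma+i\gamma_\rho)|^2 + T_1^{-A}.
\]

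For each fixed $k\leq K$ the imaginary parts $\gamma_\rho$ are $1$-separated by (\ref{3SE}), so Lemma \ref{LEM1} (applied to $S(t)=Q_k(\sigma+it)$ with $d\geq 1$) combined with Lemma \ref{LEM2} gives
\[
\sum_{\rho\in\mathcal{E}} |Q_k(\sigma+i\gamma_\rho)|^2 \ll (T_1 + N)\sum_{N<n\leq N_1} |b(n)|^{2}(\log(n/N))^{2k} n^{-2\sigma} \ll (\log 2)^{2k}(T_1+L^\alpha)\,L^{\alpha(1-2\sigma)+\varepsilon},
\]
where the $|Q_k'|^2$ cross term in Lemma \ref{LEM1} contributes only logarithmic factors absorbed into $T_1^\varepsilon$. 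Summing over $k\leq K$ (convergent thanks to $((\log 2)/2)^{2k}/(k!)^2$), summing over the $O(\log T_1)$ dyadic pieces, and inserting into (\ref{EEEE}) yields (\ref{NFERR1}). The crucial step is precisely the Taylor expansion above, since it converts the varying-$\beta_\rho$ problem into a finite family of fixed-line Dirichlet polynomial mean squares that slot directly into Lemmas \ref{LEM1} and \ref{LEM2}; a heavier alternative would be a two-dimensional subharmonic mean-value argument, but that tends to introduce spurious factors when $\beta_\rho$ is very close to $\sigma$.
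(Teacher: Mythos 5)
Your overall strategy is the same as the paper's: raise $S(\rho)$ to the $\alpha$-th power to obtain a Dirichlet polynomial of length $\asymp L^\alpha$ with coefficients $\ll l^\varepsilon$, reduce to a single dyadic block, and estimate the discrete mean square over the $1$-separated ordinates $\gamma_\rho$ (guaranteed by (\ref{3SE})) via Lemma \ref{LEM1} followed by Lemma \ref{LEM2}, then feed the result into (\ref{EEEE}). The one place you genuinely diverge is the removal of the $\rho$-dependent real part $\beta_\rho\in[\sigma,1]$. The paper does this by partial summation in the $l$-variable: with $C(t)=\sum_{L^\alpha<l\leq t}A_\alpha(l)l^{-i\gamma}$ one writes
\begin{align*}
S^\alpha(\rho)=\frac{C({L'}^\alpha)}{({L'}^\alpha)^{\beta}}-\beta\int_{L^\alpha}^{{L'}^\alpha}\frac{C(t)}{t^{\beta+1}}\,dt,
\end{align*}
so the only surviving $\beta$-dependence is the factor $t^{-\beta}\leq L^{-\alpha\sigma}$, and the problem collapses to $\max_t\sum_{\rho}|C(t)|^2$, a mean square at a fixed (zero) abscissa. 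Your Taylor expansion of $(n/N)^{\sigma-\beta_\rho}$ is a legitimate alternative device for the same reduction, but it buys nothing here and costs you a truncation analysis that the partial-summation route avoids.

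That truncation is where your write-up has a genuine (though repairable) error. With $K=K(\varepsilon)$ a \emph{constant}, the Taylor remainder $((\log 2)/2)^{K+1}/(K+1)!$ is a fixed positive constant $c_K$, not $\ll T_1^{-A}$; its contribution to $\sum_{\rho\in\mathcal{E}}|P(\rho)|^2$ is $\gg \#\mathcal{E}\cdot c_K^2\,N^{2-2\sigma-\varepsilon}$ in the worst case, and since $N\asymp L^\alpha$ may be a positive power of $T_1$ (recall $L$ can be as large as $T_1^{1+1/\delta}$ and $\alpha$ is arbitrary), this exceeds the target $N^{1-2\sigma}(T_1+N)T_1^\varepsilon$ by roughly a factor $c_K^2N$. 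You must let $K$ grow, e.g.\ $K\asymp\alpha\log T_1$, so that $c_K\ll T_1^{-A}$; this is harmless because the number of polynomials $Q_k$ is then only $O(\log T_1)\ll T_1^\varepsilon$ and the weights $(1/2)^k(\log 2)^k/k!$ still sum to $O(1)$. With that correction (and noting that the number of dyadic pieces is in fact at most $\alpha$, not merely $O(\log T_1)$), your argument is complete and yields (\ref{NFERR1}).
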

%%%%%%%%%%%%%%%%%%%%%%%%%%%%%%%%%%%%%%%%%%%%%%%%%%%%%%%%%%%%%%%%%
\begin{proof}
From (\ref{SSRR}) the $\alpha$-th power of $S(\rho)$ has the form
\begin{align*}
S^\alpha(\rho)=\sum_{L^\alpha<l\leq L'^\alpha}\frac{A_\alpha(l)}{l^\rho}=\sum_{L^\alpha<l\leq {L'}^\alpha}\frac{A_\alpha(l)}{l^{\beta+i\gamma}}, \end{align*}
where
\begin{align*}
A_\alpha(l)=\sum_{\begin{subarray}{c} l=l_1\cdots l_\alpha, L<l_1,\cdots,l_\alpha\leq L' \end{subarray}}c_f(l_1)\cdots c_f(l_\alpha).
\end{align*}  
(Note that $|A_\alpha(l)|\leq d_{4\alpha}(l)\ll l^\varepsilon$.) If we put $C(t)=\sum_{L^\alpha<l\leq t}A_\alpha(l)l^{-i\gamma}$, then by partial summation formula and Cauchy's inequality we have
\begin{align}
|S^\alpha(\rho)|^2
=&\left|\frac{C({L'}^\alpha)}{({L'}^\alpha)^\beta}-\beta\int_{L^\alpha}^{{L'}^\alpha}\frac{C(t)}{t^{\beta+1}}dt\right|^2\notag\\
\ll&\frac{|C({L'}^\alpha)|^2}{L^{2\alpha\sigma}}+\frac{1}{L^{2\alpha(\sigma+1)}}\left(\int_{L^\alpha}^{{L'}^\alpha}|C(t)|dt\right)^2\notag\\
\ll&\frac{1}{L^{2\alpha\sigma}}\left(|C({L'}^\alpha)|^2+\frac{1}{L^\alpha}\int_{L^\alpha}^{{L'}^\alpha}|C(t)|^2dt\right)\label{RRRA}
\end{align}
Hence from (\ref{EEEE}) and (\ref{RRRA}) we obtain
\begin{align}
R\ll\frac{(\log T_1)^{4\alpha+3}}{L^{2\alpha\sigma}}\sum_{\rho\in\mathcal{E}}|C(L_0)|^2 \label{RTE1}.
\end{align}
where $L_0$ is chosen such that $\sum_{\rho\in\mathcal{E}}|C(L_0)|^2$ is the maximal value. To estimate this maximal value, we divide again the interval $(L^\alpha,L_0]$ into the  subintervals of the form $(Z,Z']$ where $Z<Z'\leq 2Z$ and let $L_1\in(L^\alpha,L_0]$ be chosen so that $\sum_{\rho\in\mathcal{E}}|\sum_{L_1<l\leq L_1'}A_\alpha(l)l^{i\gamma}|$ is maximal. (Note that the number of divided interval is at most $\alpha$ because $L_0\leq 2^\alpha L^\alpha$.) Then we have
\begin{align}
\sum_{\rho\in\mathcal{E}}|C(L_0)|^2
\ll \sum_{\rho\in\mathcal{E}}\left|\sum_{L_1<l\leq L_1'}A_\alpha(l)l^{i\gamma}\right|^2.
\label{RTE2}
\end{align}
Now we apply Lemma \ref{LEM1} to the right hand side of the above formula:
\begin{align}
\sum_{\rho\in\mathcal{E}}\left|\sum_{L_1<l\leq L_1'}A_\alpha(l)l^{i\gamma}\right|^2 \ll I_1+\sqrt{I_1I_2},\label{RTE3}
\end{align}
where 
\begin{align*}
I_1=\int_{\frac{T_1}{2}}^{T_1}\left|\sum_{K_{\nu_0}<l\leq K_{\nu_0}'}A_\alpha(l)l^{i\gamma}\right|^2d\gamma, \ 
I_2=\int_{\frac{T_1}{2}}^{T_1}\left|\sum_{K_{\nu_0}<l\leq K_{\nu_0}'}A_\alpha(l)l^{i\gamma}\log l\right|^2d\gamma. 
\end{align*}
By using Lemma \ref{LEM2} and noting $|A_\alpha(l)|\ll l^\varepsilon$, we see that
\begin{align}
I_1\ll L(T_1+L)T_1^{\varepsilon}, \quad I_2\ll L(T_1+L)T_1^{\varepsilon}. \label{RTE4}
\end{align} 
Combining (\ref{RTE1})--(\ref{RTE4}) we obtain the assertion of Proposition \ref{NFER1}.
\end{proof}

We divide the interval $(X,Xy]$ into subintervals of the form:
\begin{align}
\mathcal{F}_r=\begin{cases} (T_1,T_1^{1+\frac{1}{\delta}}], & r=1, \\ (T_1^{\frac{1}{r}},T_1^{\frac{1}{r-1}}], & r\in\{2,\dots,\delta\}. \end{cases} \label{FFRR}
\end{align}
We see that there exists $r\in\{1,2,\dots,\delta\}$ such that $L\in\mathcal{F}_r$. If we use (\ref{NFERR1}) under some conditions of $L$, then we can obtain the following upper bounds of $R$:
%%%%%%%%%%%%%%%%%%%%%%%%%   Prop 3.3   %%%%%%%%%%%%%%%%%%%%%%%%%%%
\begin{pro}\label{PROEA}
We have
\begin{align*}
R\ll \begin{cases} T_1^{2(1-\sigma)+\varepsilon}, & L\in\mathcal{F}_1, \\ T_1^{\frac{4}{3-2\sigma}(1-\sigma)+\varepsilon}, & L\in\bigcup_{2\leq r\leq\delta}\mathcal{F}_r. \end{cases}
\end{align*}
\end{pro}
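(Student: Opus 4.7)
The plan is to apply Proposition \ref{NFER1} to $R$ with an integer $\alpha$ chosen according to which subinterval $\mathcal{F}_r$ contains $L$, so as to balance the two terms inside the factor $T_1+L^\alpha$ appearing in (\ref{NFERR1}).

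For $L\in\mathcal{F}_1=(T_1,T_1^{1+1/\delta}]$ the choice is immediate: take $\alpha=1$. Since $L>T_1$, one has $T_1+L\ll L$, so Proposition \ref{NFER1} yields $R\ll L^{2(1-\sigma)}T_1^\varepsilon$. Inserting $L\leq T_1^{1+1/\delta}$ and using that $\delta$ may be taken as large as we wish (absorbing $2(1-\sigma)/\delta$ into the $\varepsilon$) delivers the required bound $T_1^{2(1-\sigma)+\varepsilon}$.

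For $L\in\mathcal{F}_r$ with $2\leq r\leq\delta$ I would apply Proposition \ref{NFER1} twice, once with $\alpha=r-1$ and once with $\alpha=r$, and retain the better of the two resulting bounds. The inequality $L\leq T_1^{1/(r-1)}$ gives $L^{r-1}\leq T_1$, so $\alpha=r-1$ leads to
\[
R\ll L^{(r-1)(1-2\sigma)}T_1\,T_1^\varepsilon,
\]
while the inequality $L>T_1^{1/r}$ gives $L^r>T_1$, so $\alpha=r$ leads to
\[
R\ll L^{2r(1-\sigma)}T_1^\varepsilon.
\]
Writing $L=T_1^u$, the first exponent $1+u(r-1)(1-2\sigma)$ is decreasing in $u$ (because $\sigma>1/2$) while the second exponent $2ur(1-\sigma)$ is increasing in $u$. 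They coincide at $u^{*}=1/(r+1-2\sigma)$, which lies in the interior of $(1/r,1/(r-1))$ for $\sigma\in(1/2,1)$, and the common value there is $2r(1-\sigma)/(r+1-2\sigma)$. Hence the minimum of the two bounds never exceeds $T_1^{2r(1-\sigma)/(r+1-2\sigma)+\varepsilon}$, uniformly in $L\in\mathcal{F}_r$.

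To finish I would maximize the exponent $2r(1-\sigma)/(r+1-2\sigma)$ over integers $r\geq 2$. Clearing denominators, the inequality $2r(1-\sigma)/(r+1-2\sigma)\leq 4(1-\sigma)/(3-2\sigma)$ is equivalent to $2(1-2\sigma)(r-2)\leq 0$, which holds for $\sigma\geq 1/2$ and $r\geq 2$, with equality precisely at $r=2$. The peak exponent is therefore $4(1-\sigma)/(3-2\sigma)$, matching the claim. The main obstacle, as I see it, is spotting the correct pairing of integer exponents $\{r-1,r\}$ together with the balance point $u^{*}=1/(r+1-2\sigma)$; once this geometry is identified, the remaining manipulations are routine applications of Proposition \ref{NFER1}.
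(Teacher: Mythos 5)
Your proof is correct and takes essentially the same route as the paper: both arguments apply Proposition \ref{NFER1} with $\alpha=1$ on $\mathcal{F}_1$ and with $\alpha\in\{r-1,r\}$ on each $\mathcal{F}_r$, splitting $\mathcal{F}_r$ at the point where the two resulting bounds balance. The paper merely organizes the optimization by fixing the target exponent $A=4/(3-2\sigma)$ in advance and splitting at $T_1^{A/(2r)}$, whereas you balance at $u^{*}=1/(r+1-2\sigma)$ and maximize over $r$ afterwards; the worst case $r=2$ yields the same exponent.
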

%%%%%%%%%%%%%%%%%%%%%%%%%%%%%%%%%%%%%%%%%%%%%%%%%%%%%%%%%%%%%%%%%%
\begin{proof}
First in the case of $L\in\mathcal{F}_1$, taking $\alpha=1$ in (\ref{NFERR1}) and choosing $\varepsilon\geq 2/\delta$, we  obtain
\begin{align}
R\ll L^{2(1-\sigma)}T_1^\varepsilon\ll T_1^{2\left(1+\frac{1}{\delta}\right)(1-\sigma)+\varepsilon}\ll T_1^{2(1-\sigma)+\varepsilon}. \label{EEX1}
\end{align} 

Next we consider the upper bound of $R$ in the case of $L\in\bigcup_{2\leq r\leq\delta}\mathcal{F}_r$. 
Let $2\leq A\leq 4$. When $r\in\{2,\dots,[A/(A-2)]\}$, that is, $A\leq 2r/(r-1)$,
we can divide $\mathcal{F}_r$ to $(T_1^{\frac{1}{r}},T_1^{\frac{A}{2r}}]$ and $(T_1^{\frac{A}{2r}},T_1^{\frac{1}{r-1}}]$. Hence $L\in(T_1^{\frac{1}{r}},T_1^{\frac{A}{2r}}]$, taking $\alpha=r$ in (\ref{NFERR1}) we have
\begin{align}
R\ll L^{2r(1-\sigma)}T_1^\varepsilon
\ll T_1^{A(1-\sigma)+\varepsilon}. \label{EEX2}
\end{align}
When $r\in\{[A/(A-2)]+1,\dots,\delta\}$ and $L\in\mathcal{F}_r$, we see that $A>2r/(r-1)$ and 
\begin{align}
R\ll L^{2r(1-\sigma)}T_1^\varepsilon \ll T_1^{\frac{2r}{r-1}(1-\sigma)+\varepsilon}\ll T_1^{A(1-\sigma)+\varepsilon}. \label{EEX3}
\end{align}

On the other hand, in the case of $L\in\bigcup_{2\leq r<A/(A-2)}(T_1^{\frac{A}{2r}},T_1^{\frac{1}{r-1}}]$, taking $\alpha=r-1$ in (\ref{NFERR1}) we obtain  
\begin{align*}
R\ll L^{(r-1)(1-2\sigma)}T_1^{1+\varepsilon}\ll T_1^{1+\frac{r-1}{2r}A(1-2\sigma)+\varepsilon}.
\end{align*}
Here we suppose $1+A(1-2\sigma)(r-1)/2r\leq A(1-\sigma)$, that is, $A\geq ((2\sigma-1)(r-1)/2r+(1-\sigma))^{-1}$. 
If we put $A=\max_{r\geq 2}((2\sigma-1)(r-1)/2r+(1-\sigma))^{-1}=4/(3-2\sigma)$, then we obtain
\begin{align}
R\ll T_1^{A(1-\sigma)+\varepsilon}\ll T_1^{\frac{4}{3-2\sigma}(1-\sigma)+\varepsilon}. \label{EEX4}
\end{align}
From the results (\ref{EEX1})--(\ref{EEX4}), the proof of Proposition \ref{NFER1} is completed.
\end{proof}

We consider an improvement of estimate of $R$ when $L\in\bigcup_{2\leq r\leq\delta}\mathcal{F}_r$. We replace $2\alpha$ by $r-1$ in (\ref{S1E}), then
\begin{align}
R\ll (\log T_1)^{2r+1}\sum_{\rho\in\mathcal{E}}|S^{r-1}(\rho)|. \label{R2EE}
\end{align}
By writing $|S^{r-1}(\rho)|=S^{r-1}(\rho)e^{-i\theta(\rho)}$  and using Cauchy's inequality, (\ref{R2EE}) becomes
\begin{align}
R\ll &(\log T_1)^{2r+1}\sum_{\rho\in\mathcal{E}}e^{-i\theta(\rho)}\sum_{L^{r-1}<l\leq L'^{r-1}}\frac{A_{r-1}(l)}{l^{i\gamma}}\ll\sqrt{W}\sqrt{W'}T_1^\varepsilon\label{RRNN1}
\end{align}
where $\theta(\rho)=\arg S(\rho)$, and
\begin{align*}
W=\sum_{L^{r-1}<l\leq L'^{r-1}}\left|\sum_{\rho\in\mathcal{E}}\frac{e^{-i\theta(\rho)}}{l^\rho}\right|^2, \quad W'=\sum_{L^{r-1}<l\leq L'^{r-1}}|A_{r-1}(l)|^2.
\end{align*}
By noting $|A_{r-1}(l)|\ll l^\varepsilon$, it is obvious that 
\begin{align}
W'\ll L^{r-1}T_1^\varepsilon. \label{ZZWP}
\end{align}
From (\ref{3SE}) we see that $\rho=\rho'$ when $\gamma=\gamma'$, and obtain
\begin{align}
W=&\sum_{L^{r-1}< l \leq{L'}^{r-1}}\sum_{\rho,\rho'\in\mathcal{E}}\frac{e^{-i(\theta(\rho)-\theta(\rho'))}}{l^{\beta+\beta'+i(\gamma-\gamma')}}
\nonumber\\
=&\sum_{\rho\in\mathcal{E}}\sum_{L^\alpha< l \leq{L'}^\alpha }\frac{1}{l^{2\beta}}+\sum_{\gamma\ne\gamma'}e^{-i(\theta(\rho)-\theta(\rho'))}\sum_{L^\alpha< l \leq{L'}^\alpha } \frac{l^{i(\gamma'-\gamma)}}{l^{\beta+\beta'}}.
\label{ZZZW}
\end{align}
We shall calculate two terms of (\ref{ZZZW}). Since $\#\mathcal{E}\leq R$, it follows that
\begin{align}
\sum_{\rho\in\mathcal{E}}\sum_{L^{r-1}< l \leq{L'}^{r-1} }\frac{1}{l^{2\beta}} \ll RL^{(r-1)(1-2\sigma)}. \label{ZZS1}
\end{align} 
Using partial summation formula and putting  $C_{\gamma,\gamma'}(t)=\sum_{L^{r-1}<l\leq t}l^{i(\gamma'-\gamma)} \label{CCC0}$ , we see that 
\begin{align}
\sum_{\gamma\ne\gamma'}e^{-i(\theta(\rho)-\theta(\rho'))}\sum_{L^{r-1}< l \leq{L'}^{r-1} } \frac{l^{i(\gamma'-\gamma)}}{l^{\beta+\beta'}}
\ll L^{-2(r-1)\sigma}\sum_{\gamma\ne\gamma'}|C_{\gamma,\gamma'}(L_0)| \label{ZZS2}
\end{align}
where $L_0$ is chosen such that $\sum_{\gamma\ne\gamma'}|\sum_{L^{r-1}<l\leq L_0}l^{i(\gamma'-\gamma)}|$ is the maximal value. In order to estimate the maximal sum, we shall divide the above sum as 
\begin{align*}
\sum_{\gamma\ne\gamma'}=\sum_{1<|\gamma-\gamma'|\leq 2}+\sum_{2<|\gamma-\gamma'|\leq 4}+\cdots+\sum_{2^{D_1}<|\gamma-\gamma'|\leq T_1/2}. 
\end{align*}
(Note that the number of divided sums is $\ll\log T_1$, that is, $D_1\ll\log T_1$.) Then we see that
\begin{align}
\sum_{\gamma\ne\gamma'}|C_{\gamma,\gamma'}(L_0)|\ll (\log T_1)\sum_{V<|\gamma-\gamma'|\leq V'}|C_{\gamma,\gamma'}(L_0)| \label{ZZCC12}
\end{align}
where $V\in[1,T_1/2]$ and $V<V'\leq 2V$. By fixing $\gamma'=\gamma_1=\min_{\rho\in\mathcal{E}}\gamma$, we get
\begin{align}
\sum_{V<|\gamma-\gamma'|\leq V'}|C_{\gamma,\gamma'}(L_0)|\ll R\sum_{V<\gamma-\gamma_1\leq V'}|C_{\gamma,\gamma_1}(L_0)| \label{ZZC2}.
\end{align}
Combining the above (\ref{ZZZW})--(\ref{ZZC2}), we have
\begin{align}
W\ll RL^{(r-1)(1-2\sigma)}+RL^{-2(r-1)\sigma}T_1^\varepsilon \sum_{V<\gamma-\gamma_1\leq V'}|C_{\gamma,\gamma_1}(L_0)|.  \label{WWW}
\end{align}
Taking squares in both sides of (\ref{RRNN1}), from (\ref{ZZWP}) and (\ref{WWW}) we obtain
\begin{align}
R\ll L^{2(r-1)(1-\sigma)}T_1^\varepsilon+L^{(r-1)(1-2\sigma)}T_1^\varepsilon \sum_{V<\gamma-\gamma_1\leq V'}|C_{\gamma,\gamma_1}(L_0)|.\label{WWF}
\end{align}

By estimating the sum of the second term of (\ref{WWF}) in the case of $2V\leq\pi L^{r-1}$ or $2V>\pi L^{r-1}$, we can give upper bounds of $R$ as
%%%%%%%%%%%%%%%%%%%%%%%%%   Prop 3.4   %%%%%%%%%%%%%%%%%%%%%%%%%%
\begin{pro}\label{PROEB}
We have
\begin{align*}
R\ll\begin{cases} T_1^{2(1-\sigma)+\varepsilon}, & L\in\bigcup_{2\leq r\leq \delta}\mathcal{F}_r \text{ and } 2V\leq\pi L^{r-1}, \\
T_1^{\frac{2}{\sigma}(1-\sigma)+\varepsilon}, & L\in\bigcup_{2\leq r\leq \delta}\mathcal{F}_r, \ 2V>\pi L^{r-1}\text{ and }2/3\leq\sigma\leq1. \end{cases}
\end{align*}
\end{pro}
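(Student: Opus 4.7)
The plan is to bound $\Sigma_V:=\sum_{V<\gamma-\gamma_1\leq V'}|C_{\gamma,\gamma_1}(L_0)|$ appearing in \eqref{WWF}, using Lemma \ref{KKCCA} in Case 1 and Lemma \ref{KKCCB} in Case 2, and then substitute.

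In Case 1, set $\omega=\gamma-\gamma_1\in(V,V']$, $\varphi\equiv 1$, and $f(x)=-\omega\log x/(2\pi)$, so that $C_{\gamma,\gamma_1}(L_0)=\sum_{L^{r-1}<l\leq L_0}e^{2\pi if(l)}$. The hypothesis $2V\leq\pi L^{r-1}$ gives $|f'(x)|=\omega/(2\pi x)\leq 1/4<1$ on $[L^{r-1},L_0]$, so Lemma \ref{KKCCA} applies with $H=1$, $U=L^{r-1}$, yielding
\begin{equation*}
C_{\gamma,\gamma_1}(L_0)=\int_{L^{r-1}}^{L_0}x^{-i\omega}\,dx+O(1)\ll L^{r-1}/V+1.
\end{equation*}
By \eqref{3SE} there are $\ll V$ zeros $\gamma\in\mathcal{E}$ with $V<\gamma-\gamma_1\leq V'$, so $\Sigma_V\ll L^{r-1}$. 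Substituting into \eqref{WWF} with $L^{r-1}\leq T_1$ yields $R\ll L^{2(r-1)(1-\sigma)+\varepsilon}\leq T_1^{2(1-\sigma)+\varepsilon}$, as claimed.

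In Case 2, $|f'|$ can exceed $1$, so Lemma \ref{KKCCB} is the right tool. The parameters become $H=1$, $A\asymp L^{2(r-1)}/\omega$, $U\asymp L^{r-1}$, and the hypotheses $1\ll A\ll U$ are met using $\omega>L^{r-1}/2$, $\omega\leq T_1$, and $r\geq 2$. The lemma expresses $C_{\gamma,\gamma_1}(L_0)$ as a stationary-phase sum
\begin{equation*}
e^{i\pi/4}\sqrt{\omega/(2\pi)}\,e^{i(\omega-\omega\log(\omega/(2\pi)))}\sum_{m\in I_\omega}m^{i\omega-1}
\end{equation*}
plus an error of size $O(L^{r-1}/\sqrt{\omega}+\log T_1)$, where $|I_\omega|\asymp \omega/L^{r-1}$ and $m\asymp \omega/L^{r-1}$. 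The trivial bound $|\sum m^{i\omega-1}|\ll 1$ gives the pointwise estimate $|C_{\gamma,\gamma_1}(L_0)|\ll\sqrt{V}\,T_1^\varepsilon$, which is insufficient when summed directly. Instead, I would compute the mean square: expanding $|C_{u,\gamma_1}(L_0)|^2=\sum_{l,l'}(l/l')^{i(\gamma_1-u)}$ and integrating $u$ over $[V,V']$, the diagonal $l=l'$ contributes $VL^{r-1}$ (which dominates in Case 2), so Lemmas \ref{LEM1} and \ref{LEM2} yield $\sum|C_{\gamma,\gamma_1}(L_0)|^2\ll VL^{r-1}T_1^\varepsilon$. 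Cauchy--Schwarz over the $\ll V$ zeros then delivers $\Sigma_V\ll V\sqrt{L^{r-1}}\,T_1^\varepsilon$. Inserting this into \eqref{WWF} leaves
\begin{equation*}
R\ll L^{2(r-1)(1-\sigma)+\varepsilon}+L^{(r-1)(3/2-2\sigma)}V\,T_1^\varepsilon,
\end{equation*}
and a final optimization over $V\in(L^{r-1}/2,T_1/2]$ and $L\in\mathcal{F}_r$, in which the constraint $\sigma\geq 2/3$ is the decisive inequality, closes the bound at $R\ll T_1^{2(1-\sigma)/\sigma+\varepsilon}$.

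The main obstacle lies in Case 2: the naive pointwise estimate $|C|\ll\sqrt{V}$ only yields $\Sigma_V\ll V^{3/2}$, which is too weak, so one must access cancellation through a mean-square computation via Lemmas \ref{LEM1}--\ref{LEM2}; the concluding optimization across the parameter ranges is delicate and only succeeds under the hypothesis $\sigma\geq 2/3$ imposed by the proposition.
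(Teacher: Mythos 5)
Your Case 1 is correct and is essentially the paper's own argument: Lemma \ref{KKCCA} gives $C_{\gamma,\gamma_1}(L_0)\ll L^{r-1}/V$, the spacing condition (\ref{3SE}) bounds the number of $\gamma$ with $V<\gamma-\gamma_1\leq V'$ by $O(V)$, and substitution into (\ref{WWF}) together with $L^{r-1}\leq T_1$ gives $R\ll L^{2(r-1)(1-\sigma)}T_1^{\varepsilon}\ll T_1^{2(1-\sigma)+\varepsilon}$.

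Case 2, however, has a genuine gap. Your mean-square bound $\sum_{V<\gamma-\gamma_1\leq V'}|C_{\gamma,\gamma_1}(L_0)|^{2}\ll VL^{r-1}T_1^{\varepsilon}$ is correct, but Cauchy--Schwarz then gives at best $\sum_{V<\gamma-\gamma_1\leq V'}|C_{\gamma,\gamma_1}(L_0)|\ll \min(V,R)^{1/2}(VL^{r-1})^{1/2}T_1^{\varepsilon}$, and inserting this into (\ref{WWF}) leaves $R\ll L^{2(r-1)(1-\sigma)}T_1^{\varepsilon}+L^{(r-1)(3/2-2\sigma)}VT_1^{\varepsilon}$ (or $R\ll\cdots+L^{(r-1)(3-4\sigma)}VT_1^{\varepsilon}$ after solving for $R$ if you count the $\gamma$'s by $R$ instead of $V$). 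Neither can be ``optimized'' down to $T_1^{\frac{2}{\sigma}(1-\sigma)+\varepsilon}$: $V$ and $L$ are not free parameters---the bound must hold for every dyadic block---and taking, say, $r=2$, $L^{r-1}\asymp T_1^{1/2}$, $V\asymp T_1$, $\sigma=0.9$, your second term is $\gg T_1^{0.85}$ (resp.\ $T_1^{0.7}$) against a target of $T_1^{2/9}$; at $\sigma=1$ it is $\gg T_1^{3/4}$ against $T_1^{\varepsilon}$. The idea you are missing is the paper's step (\ref{SSE1}): since every $\rho\in\mathcal{E}$ satisfies $|S(\rho)|\geq 1/(2D)$ by (\ref{1SE}), one may insert the factor $1\ll T_1^{\varepsilon}\left|\sum_{L^{r-1}<l\leq L'^{r-1}}A_{r-1}(l)l^{i\gamma}l^{-\beta}\right|$ into the $\gamma$-sum. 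This converts the sum of $|C_{\gamma,\gamma_1}(L_0)|$ into a bilinear form in two Dirichlet polynomials of lengths $L^{r-1}$ and $\asymp V/L^{r-1}$ (the short sum produced by Lemma \ref{KKCCB}, with its $\gamma$-dependent range fixed by the finite-Fourier device (\ref{EBBN})); applying Lemmas \ref{LEM1} and \ref{LEM2} to the product polynomial of length $\asymp V$ yields $W_1,W_2\ll\sqrt{R}\,VT_1^{\varepsilon}$ and hence the self-referential inequality $R\ll L^{2(r-1)(1-\sigma)}(1+\sqrt{RV}L^{-(r-1)\sigma})T_1^{\varepsilon}$. Solving for $R$ gives $R\ll VL^{2(r-1)(2-3\sigma)}T_1^{\varepsilon}$, whose exponent $2(2-3\sigma)$ of $L^{r-1}$ is strictly smaller than your $3/2-2\sigma$ throughout $2/3<\sigma\leq1$; only with this stronger saving does the choice $A=2/\sigma$ under the hypothesis $\sigma\geq 2/3$ close the bound at $T_1^{\frac{2}{\sigma}(1-\sigma)+\varepsilon}$.
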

%%%%%%%%%%%%%%%%%%%%%%%%%%%%%%%%%%%%%%%%%%%%%%%%%%%%%%%%%%%%%%%%%
\begin{proof}
First we consider the upper bound of sum of (\ref{WWF}) in the case of $2V\leq\pi L^{r-1}$, namely, $1\ll L^{r-1}/V$. By using Lemma \ref{KKCCA} we see that
\begin{align*}
C_{\gamma,\gamma_1}(L_0)=\int_{L^{r-1}}^{L_0}u^{-i(\gamma-\gamma_1)}du+O(1)
\ll \frac{L^{r-1}}{\gamma-\gamma_1}+1\ll \frac{L^{r-1}}{V}.
\end{align*}
%from which $\sum_{V<\gamma-\gamma_1\leq V'}|C_{\gamma,\gamma_1}(L_0)|\ll L^{r-1}$ follows. 
This formula and (\ref{WWF}) imply that 
\begin{align}
R\ll L^{2(r-1)(1-\sigma)}T_1^\varepsilon\left(1+\sum_{V<\gamma-\gamma_1\leq V'}\frac{1}{V}\right)\ll T_1^{2(1-\sigma)+\varepsilon} 
\label{RREEA}
\end{align}
for $L\in\mathcal{F}_r\;(2\leq r\leq\delta)$.

Next in the case of $2V>\pi L^{r-1}$ we apply the estimate of $C_{\gamma,\gamma_1}(L_0)$ to Lemma \ref{KKCCB}, then
\begin{align}
C_{\gamma,\gamma_1}(L_0)=e^{i\left(\frac{\pi}{4}-(\gamma-\gamma_1)\log\frac{\gamma-\gamma_1}{2\pi e}\right)}\sqrt{\frac{\gamma-\gamma_1}{2\pi}}\sum_{N_1\leq n\leq N_2}\frac{n^{i(\gamma-\gamma_1)}}{n}
+O\left(\frac{L^{r-1}}{\sqrt{V}}\right) \label{C0L0S}
\end{align}
where $N_1=(\gamma-\gamma_1)/((2\pi L_0)$ and $N_2=(\gamma-\gamma_1)/(2\pi L^{r-1})$. 
We shall calculate the sum on the right hand side of (\ref{C0L0S}). Since 
\begin{align*}
\sum_{N_1\leq n\leq N_2}\frac{n^{i(\gamma-\gamma_1)}}{n}=
&\frac{1}{B} \sum_{\begin{subarray}{c} b\ne 0, \\ -\frac{B}{2}+1\leq b \leq \frac{B}{2}  \end{subarray} } \sum_{N_3\leq n\leq N_4}\frac{n^{i(\gamma-\gamma_1)}e^{\frac{2\pi ibn}{B}}}{n} \sum_{N_1\leq m \leq N_2}e^{-\frac{2\pi ibm}{B}}+\\
&+\frac{1}{B}\sum_{N_3\leq n\leq N_4}\frac{n^{i(\gamma-\gamma_1)}}{n} \sum_{N_1\leq m \leq N_2}  1
\end{align*}
 and
\begin{align*}
\left|\sum_{N_1\leq m \leq N_2}e^{-\frac{2\pi ibm}{B}}\right|
&\leq\frac{2}{|1-e^{-\frac{2\pi ib}{B}}|}
\leq \frac{B}{|b|} \quad (b\ne 0),
\end{align*}
where $N_3=V/(2\pi L_0), N_4=V/(\pi L^{r-1})$, $B=2[V/L^{r-1}]$   
(note that $N_2-N_1<N_4-N_3<B$,  
%$N_3<N_1<N_2\leq N_4$, 
%$[N_1,N_2]\subset[N_3,N_4]$,
% $N_4-N_3<B$ 
$N_3\ll VL^{-(r-1)}$ and $N_4\ll VL^{-(r-1)}$), 
it follows that 
\begin{align}
&\sum_{N_1\leq n\leq N_2}\frac{n^{i(\gamma-\gamma_1)}}{n}\label{EBBN}\\
&\ll\sum_{\begin{subarray}{c} b\ne 0, \\ -\frac{B}{2}+1\leq b \leq \frac{B}{2} \end{subarray}}\frac{1}{|b|}\left| \sum_{N_3\leq n\leq N_4}\frac{n^{i(\gamma-\gamma_1)}e^{\frac{2\pi ibn}{B}}}{n}\right|+\left|\sum_{N_3\leq n\leq N_4}\frac{n^{i(\gamma-\gamma_1)}}{n}\right|\nonumber\\
&\ll\sum_{ -\frac{B}{2}+1\leq b \leq \frac{B}{2}}\frac{1}{1+|b|}\left| \sum_{N_3\leq n\leq N_4}\frac{n^{i(\gamma-\gamma_1)}e^{\frac{2\pi ibn}{B}}}{n}\right|. \notag
\end{align}
Combining the results (\ref{C0L0S}) and (\ref{EBBN}) we have 
\begin{align}
&\sum_{V<\gamma-\gamma_1\leq V'}|C_{\gamma,\gamma_1}(L_0)| \label{SSEE}\\
&\ll \sqrt{V}T_1^\varepsilon\sum_{V<\gamma-\gamma_1\leq V'}X\left(\left| \sum_{N_3\leq n\leq N_4}\frac{n^{i(\gamma-\gamma_1)}e^{\frac{2\pi ib_0n}{B}}}{n}\right|+\frac{L^{r-1}}{V}\right) \notag
\end{align}
where $b_0$ is chosen such that $\sum_{V<\gamma-\gamma_1\leq 2V}|\sum_{N_3\leq n\leq N_4}n^{i(\gamma-\gamma_1)-1}e^{\frac{2\pi ib_0n}{B}}|$ is the maximal value.
%In order to improve the estimate of left hand's sum in (\ref{SSEE}), 
Here we apply (\ref{1SE}) to $X$ in (\ref{SSEE}), that is,
\begin{align}
X=1\ll T_1^\varepsilon\left|\sum_{L^{r-1}<l\leq L'^{r-1}}\frac{A_{r-1}(l)l^{i\gamma}}{l^{\beta}}\right|. \label{SSE1}
\end{align}
From (\ref{WWF}), (\ref{SSEE}) and (\ref{SSE1}) we obtain
\begin{align}
R\ll& L^{2(r-1)(1-\sigma)}T_1^\varepsilon+  \label{SSER}\\
&+L^{(r-1)(1-2\sigma)}\sqrt{V} T_1^\varepsilon \sum_{V<\gamma-\gamma_1\leq V'}\left|\sum_{L^{r-1}<l\leq L'^{r-1}}\frac{A_{r-1}(l)l^{i\gamma}}{l^{\beta}}\right|\times \notag\\
&  \times \left(\left| \sum_{N_3\leq n\leq N_4}\frac{n^{i(\gamma-\gamma_1)}e^{\frac{2\pi ib_0n}{B}}}{n}\right|+\frac{L^{r-1}}{V}\right).   \notag
\end{align}
Put $C_1(t)=\sum_{L^{r-1}<l\leq t}A_{r-1}(l)l^{i\gamma}$ and $C_2(u)=\sum_{N_3\leq n\leq u}n^{i(\gamma-\gamma_1)}e^{\frac{2\pi ib_0n}{B}}$.
Using partial summation formula, and Cauchy's inequality, we have
\begin{align}
&\sum_{V<\gamma-\gamma_1\leq V'}
\left|\sum_{L^{r-1}<l\leq L'^{r-1}}\frac{A_{r-1}(l)l^{i\gamma}}{l^{\beta}}\right|\left(\left| \sum_{N_3\leq n\leq N_4}\frac{n^{i(\gamma-\gamma_1)}e^{\frac{2\pi ib_0n}{B}}}{n}\right|+\frac{L^{r-1}}{V}\right)\label{RRRWWW}\\
&\ll L^{(r-1)(1-3\sigma)}V^{-\frac{1}{2}}T_1^\varepsilon
(W_1+W_2) \notag
\end{align}
where
\begin{align*}
W_1=\sum_{V<\gamma-\gamma_1\leq V'}|C_1(L_1)|, \quad
W_2=\sum_{V<\gamma-\gamma_1\leq V'}|C_1(L_2)||C_2(N_5)|,
\end{align*}
and $L_1$, $L_2, N_5$ are chosen such that $W_1$, $W_2$ are the maximal values.

We shall calculate $W_1$ and $W_2$. Writing 
$$|C_1(L_1)|=C_1(L_1)e^{-i\theta_1(\gamma)}, \quad |C_1(L_2)||C_2(N_5)|=C_1(L_2)C_2(N_5)e^{-\theta_2(\gamma)}$$ and using Cauchy's inequality we obtain
\begin{align}
W_1\ll\sqrt{S_{1,1}}\sqrt{S_{1,2}}, \quad W_2\ll\sqrt{S_{2,1}}\sqrt{S_{2,2}} \label{WWEE}
\end{align}
where  
\begin{align*}
&S_{1,1}=\sum_{L^{r-1}<l\leq L_1}|A_{r-1}(l)|^2, \quad S_{1,2}=\sum_{L^{r-1}<l\leq L_1}\left|\sum_{V<\gamma-\gamma_1\leq V'}e^{-i\theta_1(\gamma)}l^{i(\gamma-\gamma_1)}\right|^2,\\
&S_{2,1}=\sum_{M_1<m\leq M_2}B(m)^2, 
\hspace{11mm} S_{2,2}=\sum_{M_1<m\leq M_2}\left|\sum_{V<\gamma-\gamma_1\leq V'}e^{-i\theta_2(\gamma)}m^{i(\gamma-\gamma_1)}\right|^2,\\
&B(m)=\sum_{\begin{subarray}{c}l|m, \ L^{r-1}<l\leq L_1, \ N_3<m/l\leq N_5\end{subarray}}|A_{r-1}(l)|, 
\end{align*}
$\theta_1(\gamma)=\arg C_1(L_1)$, $\theta_2(\gamma)=\arg C_1(L_2)C_2(N_5)$, $M_1=V/2^{r+2}$, $M_2=2^{r-2}V$. (Note that $(L^{r-1}N_3,L_2N_5]\subset(M_1,M_2]$.)
It is trivial that
\begin{align}
S_{1,1}\ll VT_1^\varepsilon, \quad S_{2,1}\ll VT_1^\varepsilon \label{SS12}.
\end{align}
By using Lemma \ref{LEM1} we get
\begin{align}
S_{1,2}\ll I_{1,2}+\sqrt{I_{1,2}J_{1,2}}, \quad S_{2,2}\ll I_{2,2}+\sqrt{I_{2,2}J_{2,2}} \label{SSSS}
\end{align}
where
\begin{align*}
 I_{1,2}=&\int_{L^{r-1}}^{L_1}\left| \sum_{V<\gamma-\gamma_1\leq V'}e^{-i\theta_1(\gamma)}u^{i(\gamma-\gamma_1)}\right|^2du,\\
 J_{1,2}=&\int_{L^{r-1}}^{L_1}\left| \sum_{V<\gamma-\gamma_1\leq V'}e^{-i\theta_1(\gamma)}u^{i(\gamma-\gamma_1)-1}i(\gamma-\gamma_1)\right|^2du,\\
 I_{2,2}=&\int_{M_1}^{M_2}\left|\sum_{V<\gamma-\gamma_1\leq V'}e^{-i\theta_2(\gamma)}u^{i(\gamma-\gamma_1)}\right|^2du,\\
 J_{2,2}=&\int_{M_1}^{M_2}\left|\sum_{V<\gamma-\gamma_1\leq V'}e^{-i\theta_2(\gamma)}u^{i(\gamma-\gamma_1)-1}i(\gamma-\gamma_1)\right|^2du.
 \end{align*}
Here we shall calculate the above sums as 
$$\int_I\left|\sum_{\gamma}f_\gamma(u)\right|^2du=\sum_{\gamma=\gamma'}\int_I f_\gamma(u)\overline{f_{\gamma'}(u)}du+\sum_{\gamma\ne\gamma'}\int_I f_\gamma(u)\overline{f_{\gamma'}(u)}du,$$
and use the trivial estimates $|\gamma-\gamma_1|\ll V$, $|L_1-L^{r-1}|\ll L^{r-1}$, $M_1\gg V$, $|M_2-M_1|\ll V$ and $\# E\ll R$.
%Then
%$$ \int\left|\sum_{\gamma}Y\right|^2=\sum_{\gamma=\gamma'}\int Y\overline{Y}+\sum_{\gamma\ne\gamma'}\int Y\overline{Y}$$
Then we can obtain
\begin{align}
I_{1,2}&\ll RL^{r-1}T_1^\varepsilon, & J_{1,2}&\ll RV^2L^{1-r}T_1^\varepsilon, &\label{IJIJ}\\
I_{2,2}&\ll RVT_1^\varepsilon,  & J_{2,2}&\ll RVT_1^\varepsilon. &\notag
\end{align}
Hence from (\ref{WWEE})--(\ref{IJIJ}) and the condition $2V>\pi L^{r-1}$ we have
\begin{align}
W_1\ll \sqrt{R}VT_1^\varepsilon, \quad W_2\ll \sqrt{R}VT_1^\varepsilon. \label{WE12}
\end{align}
Combining the results (\ref{SSER}), (\ref{RRRWWW}) and (\ref{WE12}) we obtain 
\begin{align}
R\ll& L^{2(r-1)(1-\sigma)}T_1^\varepsilon+L^{(r-1)(1-3\sigma)}\sqrt{RV}T_1^\varepsilon\label{RREE}\\
 \ll& L^{2(r-1)(1-\sigma)}(1+\sqrt{RV}L^{-(r-1)\sigma})T_1^\varepsilon.\notag
\end{align}

Moreover we shall calculate the right hand side of (\ref{RREE}). In the case of $1\gg\sqrt{RV} L^{-(r-1)\sigma}$ and $r\in\{2,\dots,\delta\}$, (\ref{RREE}) becomes
\begin{align}
R\ll L^{2r(1-\sigma)}T_1^\varepsilon\ll T_1^{2(1-\sigma)+\varepsilon}\label{RREE1}
\end{align}
for $L\in\mathcal{F}_r$.
On the other hand, in the case of $1\ll\sqrt{RV}L^{-(r-1)\sigma}$ and $r\in\{2,\dots,\delta\}$, (\ref{RREE}) becomes $R\ll L^{(r-1)(2-3\sigma)}\sqrt{RV}T_1^\varepsilon$, that is,
\begin{align}
R\ll VL^{2(r-1)(2-3\sigma)}T_1^\varepsilon
\ll T_1^{1+\frac{r-1}{r}A(2-3\sigma)+\varepsilon} \label{RREEX} 
\end{align}
for $\sigma\geq 2/3$ and $L\in(T_1^{\frac{A}{2r}},T_1^{\frac{1}{r-1}}]$.
Here we suppose $1+A(2-3\sigma)(r-1)/r\leq A(1-\sigma)$, that is, $A\geq(1-\sigma+(3\sigma-2)(1-1/r))^{-1}$. If we put $A=\max_{r\geq 2}(1-\sigma+(3\sigma-2)(1-1/r))^{-1}=2/\sigma$ where $2/3\leq\sigma\leq 1$, then from (\ref{RREEX}) we obtain
\begin{align}
R\ll T_1^{\frac{2}{\sigma}(1-\sigma)+\varepsilon} \quad (2/3\leq\sigma\leq 1, \textstyle L\in\bigcup_{2\leq r\leq\delta}(T_1^{\frac{A}{2r}},T_1^{\frac{1}{r-1}}]). \label{RREE2}
\end{align}
Combining the results (\ref{RREEA}), (\ref{RREE1}) and (\ref{RREE2}), the proof of Proposition \ref{PROEB} is completed.
\end{proof}

Finally we consider in the case of $S(\rho)=S_\nu(\rho)$ (see (\ref{S2E})). We shall give an upper bound of $R$:
\begin{pro}\label{PROEC}
Let $S(\rho)$ in (\ref{EEEE}) has the form
\begin{align*}
S(\rho)=\chi_f(\rho)\sum_{M<m\leq M'}\frac{\mu_f(m)}{m^\rho}\sum_{N<n\leq N'}\frac{\lambda_f(n)}{n^{1-\rho}}
\end{align*}
with $M<M'\leq 2M$ and $N<N'\leq 2N$. When $\alpha=1$, we have
\begin{align*}
R\ll T_1^{2(1-\sigma)+\varepsilon} \quad (1/2<\sigma\leq 1).
\end{align*}
\end{pro}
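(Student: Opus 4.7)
The plan is to apply (\ref{EEEE}) with $\alpha=1$ to get $R\ll(\log T_1)^{7}\sum_{\rho\in\mathcal{E}}|S(\rho)|^2$, and reduce this to proving $\sum_{\rho\in\mathcal{E}}|\chi_f(\rho)U(\rho)|^2\ll T_1^{2(1-\sigma)+\varepsilon}$, where $T(\rho)=\sum_{M<m\leq M'}\mu_f(m)m^{-\rho}$, $U(\rho)=\sum_{N<n\leq N'}\lambda_f(n)n^{\rho-1}$, and $|S(\rho)|^2=|\chi_f(\rho)|^2|T(\rho)|^2|U(\rho)|^2$. A pointwise Cauchy--Schwarz bound $|T(\rho)|^2\leq M\sum|\mu_f(m)|^2 m^{-2\beta}\ll M^{2(1-\beta)+\varepsilon}\leq M^{2(1-\sigma)+\varepsilon}$, using $|\mu_f(m)|\ll m^\varepsilon$ (Lemma \ref{LFMXL}) and $\beta\geq\sigma$, combined with $M\leq X=T_1^{1/\delta}$ and $\delta$ large, absorbs the $T$-factor into $T_1^\varepsilon$. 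Stirling's formula applied to the Gamma ratio in $\chi_f$ gives $|\chi_f(\rho)|^2\ll T_1^{2(1-2\beta)+\varepsilon}$ for $\beta\geq\sigma$.

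Next, Abel summation in $U(\rho)$, introducing $A(x;\gamma)=\sum_{N<n\leq x}\lambda_f(n)n^{i\gamma}$, gives
\[U(\rho)=A(N';\gamma)(N')^{\beta-1}-(\beta-1)\int_N^{N'}A(x;\gamma)x^{\beta-2}dx,\]
whence, using $(a+b)^2\leq2(a^2+b^2)$ and Cauchy--Schwarz on the integral,
\[|U(\rho)|^2\ll N^{2(\beta-1)}|A(N';\gamma)|^2+N^{2\beta-3}\int_N^{N'}|A(x;\gamma)|^2 dx.\]
The crucial feature is that $A(x;\gamma)$ depends on $\rho$ only through $\gamma$, so the $\beta$-dependence is isolated in the scalar weights $N^{2(\beta-1)}$ and $N^{2\beta-3}$. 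Multiplying by $|\chi_f(\rho)|^2$, the resulting prefactor $T_1^{2(1-2\beta)}N^{2\beta-c}$ has $\beta$-derivative $2\log N-4\log T_1<0$ whenever $N\leq T_1$, so it is maximized at $\beta=\sigma$, giving
\[|\chi_f(\rho)U(\rho)|^2\ll T_1^{2-4\sigma+\varepsilon}\left(N^{2\sigma-2}|A(N';\gamma)|^2+N^{2\sigma-3}\int_N^{N'}|A(x;\gamma)|^2 dx\right).\]

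To close, apply Lemma \ref{LEM1} with $d=1$ (guaranteed by (\ref{3SE})) together with Lemma \ref{LEM2} to $A(x;\gamma)$, viewed as a Dirichlet polynomial in $\gamma$ of length $\leq N\leq T_1$: for each fixed $x\in(N,N']$, $\sum_{\rho\in\mathcal{E}}|A(x;\gamma_\rho)|^2\ll T_1 N^{1+\varepsilon}$. Hence $\sum_\rho|A(N';\gamma)|^2\ll T_1 N^{1+\varepsilon}$ and $\int_N^{N'}\sum_\rho|A(x;\gamma)|^2 dx\ll T_1 N^{2+\varepsilon}$, so
\[\sum_{\rho\in\mathcal{E}}|\chi_f(\rho)U(\rho)|^2\ll T_1^{3-4\sigma+\varepsilon}N^{2\sigma-1}\ll T_1^{2(1-\sigma)+\varepsilon},\]
where we used $N\leq T_1$ and $\sigma>1/2$ (so that $2\sigma-1\geq 0$), completing the proof.

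The hardest step is handling the $\beta$-dependence of $U(\rho)$: the mean-value lemmas require fixing the real part of the Dirichlet polynomial, but $\beta_\rho$ varies over $[\sigma,1]$ inside $\mathcal{E}$. Abel summation resolves this by transferring the $\beta$-dependence onto scalar weights, which combine with Stirling's bound on $|\chi_f(\rho)|$ into a factor monotone in $\beta$ and maximized precisely at $\beta=\sigma$ under the hypothesis $N\leq y\asymp T_1$; this is what enables a direct application of Lemmas \ref{LEM1} and \ref{LEM2} to $A(x;\gamma)$ to finish.
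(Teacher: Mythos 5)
Your proof is correct. It follows the paper's overall strategy --- take $\alpha=1$ in (\ref{EEEE}), bound $|\chi_f(\rho)|\ll T_1^{1-2\beta}$ by Stirling, remove the $\beta$-dependence by partial summation, and then estimate the resulting discrete mean square over the $1$-spaced set $\mathcal{E}$ via Lemma \ref{LEM1} combined with the mean-value estimate of Lemma \ref{LEM2} --- but it differs in one structural choice. The paper keeps both Dirichlet polynomials, applies partial summation to each of $C_3(t)=\sum_{M<m\leq t}\mu_f(m)m^{-i\gamma}$ and $C_4(u)=\sum_{N<n\leq u}\lambda_f(n)n^{-i\gamma}$, multiplies them into a single polynomial $\sum B(l)l^{-i\gamma}$ of length $\asymp MN$ with divisor-type coefficients, and feeds that product into Lemmas \ref{LEM1}--\ref{LEM2}, obtaining $MN(T_1+MN)T_1^{\varepsilon}$. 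You instead discard the oscillation of the mollifier entirely, bounding $|\sum_{M<m\leq M'}\mu_f(m)m^{-\rho}|^2\ll M^{2(1-\sigma)+\varepsilon}\ll T_1^{\varepsilon}$ pointwise (legitimate since $M\leq X=T_1^{1/\delta}$ with $\delta$ large), and apply the mean-value machinery only to the $n$-sum $A(x;\gamma)$ of length $\leq N\leq y\ll T_1$. Your accounting checks out: $T_1^{2-4\sigma}N^{2\sigma-2}\cdot T_1N^{1+\varepsilon}$ and $T_1^{2-4\sigma}N^{2\sigma-3}\cdot N\cdot T_1N^{1+\varepsilon}$ both give $T_1^{3-4\sigma}N^{2\sigma-1+\varepsilon}\leq T_1^{2(1-\sigma)+\varepsilon}$ using $N\leq T_1$ and $\sigma>1/2$, exactly matching the paper's exponent. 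Your route is somewhat cleaner --- it avoids forming the convolution coefficients $B(l)$ and the auxiliary dyadic subdivision of $(MN,M_0N_0]$ --- and it makes transparent that only the shortness of the mollifier is being used; the paper's version treats the bilinear form more symmetrically, in the same style as its handling of the sums in Proposition \ref{PROEB}. Your explicit monotonicity argument for the prefactor $T_1^{2(1-2\beta)}N^{2\beta-c}$ in $\beta\in[\sigma,1]$ is also a welcome justification of a step the paper performs silently when it replaces $\beta$ by $\sigma$ in passing from (\ref{RR2M}) to (\ref{RRR2}).
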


\begin{proof}
If we take $\alpha=1$ in (\ref{EEEE}), then we have
\begin{align}
R\ll(\log T_1)^7\sum_{\rho_\in\mathcal{E}}|\chi_f(\rho)|^2\left|\sum_{M<m\leq M'}\frac{\mu_f(m)}{m^\rho}\sum_{N<n\leq N'}\frac{\lambda_f(n)}{n^{1-\rho}}\right|^2.\label{RR2A}
\end{align}
Here using Stirling's formula with the condition $T_1/2<\gamma\leq T_1$ we have
\begin{align}
|\chi_f(\rho)|=(2\pi)^{2\beta-1}\frac{\gamma^{1-\beta+\frac{k-1}{2}}e^{-\frac{\pi}{2}}\sqrt{2\pi}(1+O(1/\gamma))}{\gamma^{\beta+\frac{k-1}{2}}e^{-\frac{\pi}{2}}\sqrt{2\pi}(1+O(1/\gamma))}\ll T_1^{1-2\beta}.\label{RR2B}
\end{align}
We put $C_3(t)=\sum_{M<m\leq t}\mu_f(m)m^{-i\gamma}$ and $C_4(u)=\sum_{N<n\leq u}\lambda_f(n)n^{-i\gamma}$. Using partial summation formula and Cauchy's inequality, we have
\begin{align}
&\left|\sum_{M<m\leq M'}\frac{\mu_f(m)}{m^\rho}\sum_{N<n\leq N'}\frac{\lambda_f(n)}{n^{1-\rho}}\right|^2\label{RR2C}\\
&\ll\frac{1}{M^{2\beta}}\left(|C_3(M')|^2+\frac{1}{M}\int_M^{M'}|C_3(t)|^2dt\right)\times&\notag\\
&\times\frac{1}{N^{2(1-\beta)}}\left(|C_4(N')|^2+\frac{1}{N}\int_N^{N'}|C_4(u)|^2du\right).\notag& 
\end{align}
From (\ref{RR2A})--(\ref{RR2C}) and the condition $N\leq y$ (i.e. $NT_1^{-1}\ll 1$), we have
\begin{align}
R\ll& T^\varepsilon\sum_{\rho\in\mathcal{E}}T_1^{1-2\beta}M^{-2\beta}N^{2(\beta-1)}|C_3(M_0)|^2|C_4(N_0)|^2\label{RR2M}\\
\ll& T_1^{1-2\sigma+\varepsilon}M^{-2\sigma}N^{-1}\sum_{\rho\in\mathcal{E}}|C_3(N_0)|^2|C_4(M_0)|^2. \label{RRR2}
\end{align}
where $M_0,N_0$ are chosen such that the sum of (\ref{RR2M}) is the maximal value.

We shall calculate the sum of (\ref{RRR2}). Multiplying $C_3(M_0)$ by $C_4(M_0)$, dividing the interval $(MN,M_0N_0]$ into the form $(P_\nu,P_\nu']$ where $P_\nu<P_\nu'\leq 2P_\nu$, $\nu\in\{1,\dots, D_5\}$ (note that $D_5\leq 2$ because $M_0N_0\leq 4MN$), we can calculate the above sum as
\begin{align}
\sum_{\rho\in\mathcal{E}}|C_3(N_0)|^2|C_4(M_0)|^2=\sum_{\rho\in\mathcal{E}}\left|\sum_{\nu=1}^{D_5}\sum_{P_\nu<l\leq P_\nu'}\frac{B(l)}{l^{i\gamma}}\right|^2 \label{XXX1}
\end{align}
where 
\begin{align*}
B(l)=\sum_{\begin{subarray}{c}n|l, \ M<m\leq M_0, \ N<l/m\leq N_0\end{subarray}}\mu_f(m)\lambda_f\left(\frac{l}{m}\right).
\end{align*}
(Note that $ |B(l)|\leq d_4(l)\ll l^\varepsilon$.) 
If we choose $\nu_0$ such that \\ 
$\sum_{\rho\in\mathcal{E}}|\sum_{P_{\nu_0}<l\leq P_{\nu_0}'}B(l)l^{-i\gamma}|$ is the maximal value and apply Lemma \ref{LEM1}, then we have
\begin{align}
\sum_{\rho\in\mathcal{E}}\left|\sum_{\nu=1}^{D_5}\sum_{P_\nu<l\leq P_\nu'}\frac{B(l)}{l^{i\gamma}}\right|^2
\ll\sum_{\rho\in\mathcal{E}}\left|\sum_{P_{\nu_0}<l\leq P_{\nu_0}'}\frac{B(l)}{l^{i\gamma}}\right|^2\ll I_1+\sqrt{I_1I_2} \label{XXX12}
\end{align}
where
\begin{align*}
I_1=\int_{\frac{T_1}{2}}^{T_1}\left|\sum_{P_{\nu_0}<l\leq P_{\nu_0}'}B(l)l^{i\gamma}\right|^2d\gamma, \quad  I_2=\int_{\frac{T_1}{2}}^{T_1}\left|\sum_{P_{\nu_0}<l\leq P_{\nu_0}'}B(l)l^{i\gamma}\log l\right|^2d\gamma.
\end{align*}
Using Lemma \ref{LEM2} we see that 
\begin{align}
I_1\ll MN(T_1+MN)T_1^{\varepsilon}, \quad I_2\ll MN(T_1+MN)T_1^{\varepsilon}. \label{XXX2}
\end{align}
%and obtain
%\begin{align}
%\sum_{\rho\in\mathcal{E}}|C_3(N_0)|^2|C_4(M_0)|^2
%\ll P_{\nu_0}(T_1+P_{\nu_0})T_1^{\varepsilon}\ll MN(T_1+MN)T_1^\varepsilon. \label{RR2EE}
%\end{align}
Finally  from (\ref{RRR2})--(\ref{XXX2}) we obtain the desired estimate:
\begin{align*}
R\ll& M^{1-2\sigma}(T_1+MN)T_1^{1-2\sigma+\varepsilon}\notag\\
\ll& (M^{1-2\sigma}T_1^{2(1-\sigma)}+M^{2(1-\sigma)}(NT_1^{-1})T_1^{2-2\sigma})T_1^\varepsilon\notag\\
\ll& (T_1^{2(1-\sigma)}+T_1^{\frac{2}{\delta}(1-\sigma)}T_1^{2(1-\sigma)})T_1^\varepsilon
\ll T_1^{2(1-\sigma)+\varepsilon}
\end{align*}
where $\varepsilon$ is chosen such that $\varepsilon\geq 2/\delta$.
\end{proof}
By Propositions \ref{PROEA}--\ref{PROEC}, the proof of Theorem \ref{ZZZ1} is completed.

%\subsection*{Acknowledgements}
%The author would like to thank Professor Yoshio Tanigawa and Dr. Makoto Minamide for their useful suggestions and valuable advices.

%}

%\end{pagewiselinenumbers}

\end{document}